\newtheorem{theorem}{Theorem}[section]
\newtheorem{thm}[theorem]{Theorem}
\newtheorem{lem}[theorem]{Lemma}
\newtheorem{prop}[theorem]{Proposition}
\theoremstyle{remark}
\newtheorem{example}{Example}
\newtheorem{remark}{Remark}
\def\Z{{\mathbb Z}}
\def\det{{\mathrm{det}}}
\def\LSym{\mathrm{LSym}}
\def\ind{\mathrm{ind}}
\def\T{{\mathcal T}}
\def\H{{\overline H}}
\def\D{{\overline D}}
\def\HH{{\overline {\mathtt H}}}
\def\DD{{\overline {\mathtt D}}}
\def\c{{\mathfrak C}}
\def\k{{ \kappa}}
\def\ok{{\overline \kappa}}
\def\y{{\bar x}}
\def\x{{\mathtt x}}
\def\z{{\bar {\mathtt x}}}
\def\uqsln{{U_q'({\mathfrak {\hat {sl_n}}})}}
\def\trop{{\rm trop}}
\title{Intrinsic energy is a loop Schur function}
 \author{Thomas Lam}\address
 {Department of Mathematics\\ University of Michigan\\ Ann Arbor\\ MI 48109 USA.}
 \date{\today}
 \email{tfylam@umich.edu}
 \urladdr{http://www.math.lsa.umich.edu/\~{ }tfylam}
 \thanks{T.L. was supported by NSF grant DMS-0652641 and DMS-0901111, and by a Sloan Fellowship.}
 \author{Pavlo Pylyavskyy}\address
{Department of Mathematics\\ University of Michigan\\ Ann Arbor\\ MI 48109 USA.}
 \email{pavlo@umich.edu}
 \urladdr{http://sites.google.com/site/pylyavskyy/}
 \thanks{P.P. was supported by NSF grant DMS-0757165.}
\begin{document}
\begin{abstract}
We give an explicit subtraction-free formula for the energy function in tensor products of Kirillov-Reshetikhin crystals for symmetric powers of the standard representation of $\uqsln$.  The energy function is shown to be the tropicalization of a stretched staircase shape loop Schur function. The latter were introduced by the authors in the study of total positivity in loop groups.
\end{abstract}
\maketitle
\section{Introduction}
The intrinsic energy function plays an important role in 
the path model for affine highest weight crystals \cite{KKMMNN}. 
The energy function is also related to charge statistic of Lascoux-Sch\"utzenberger on semistandard tableaux (see \cite{NY2}), which establishes a relation between one dimensional configuration sums arising in solvable lattice models and Kostka-Foulkes polynomials, cf. \cite{DJKMO, KKMMNN, KKMMNN2}. 

Let $B = B_1 \otimes \cdots \otimes B_m$ be a tensor product of $\uqsln$ Kirillov-Reshetikhin crystals, where each
$B_i$ is the crystal for a symmetric power of the standard representation.  We identify $B_i$ with the semistandard 
Young tableaux with row shape, filled with the numbers $1,2,\ldots,n$.  Let $b = b_1 \otimes \cdots \otimes b_m \in B$, and write $x_i^{(r+i-1)}$ for the number of $r$'s in $b_i$.  The upper index $(r+i-1)$ is to be considered as an element of $\Z/n\Z$.
Our main result is the following formula for the intrinsic energy function $\D_B$ of $B$.

Let $\delta_{t} = (t,t-1,\ldots,1)$ denote the staircase shape of side-length $t$.
\begin{theorem}\label{thm:main}
We have
$$
\D_B(b) = \min_T\left\{ \sum_{(i,j) \in (n-1)\delta_{m-1}} x_{T(i,j)}^{(i-j)}\right\},
$$
where the minimum is over all semistandard tableaux $T$ of shape $(n-1)\delta_{m-1}$, and entries in $1,2,\ldots,m$.
\end{theorem}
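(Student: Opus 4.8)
The plan is to lift the entire computation to the geometric (birational) level, prove a clean subtraction-free identity there, and then tropicalize. Recall from our work on total positivity in loop groups that the combinatorial objects here are the shadows of geometric ones: each symmetric-power Kirillov--Reshetikhin crystal $B_i$ is the tropicalization of a geometric crystal on a torus whose positive coordinates $X_i^{(r)}$ lift the letter-multiplicities $x_i^{(r)}$; the combinatorial R-matrix $R\colon B_i \otimes B_j \to B_j \otimes B_i$ is the tropicalization of a birational geometric R-matrix $\mathcal R$; and the local energy function $H$ is the tropicalization of a subtraction-free geometric energy $\mathcal E$. First I would recall the definition of the intrinsic energy as a sum of local energies,
$$
\D_B = \sum_{1 \le i < j \le m} H_{ij},
$$
where $H_{ij}$ is $H$ applied to the factors $b_i, b_j$ after sliding $b_j$ leftward past $b_{j-1}, \dots, b_{i+1}$ via $R$. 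Since tropicalization is functorial for subtraction-free maps (it intertwines composition and sends $+ \mapsto \min$, $\times \mapsto +$), the geometric quantity $\mathcal E_B := \prod_{1 \le i < j \le m} \mathcal E_{ij}$ satisfies $\D_B = \trop(\mathcal E_B)$. The theorem then reduces to the single geometric identity $\mathcal E_B(X) = s_{(n-1)\delta_{m-1}}(X)$, an equality of subtraction-free rational functions, where $s_\lambda$ is the loop Schur function; tropicalizing its tableau expansion $s_{(n-1)\delta_{m-1}} = \sum_T \prod_{(i,j)} X_{T(i,j)}^{(i-j)}$ yields exactly the minimum in the statement.

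The heart of the argument is this geometric identity, which I would establish by induction on the number $m$ of tensor factors. For the base case $m=2$ the stretched staircase $(n-1)\delta_1$ is a single row of length $n-1$, and one computes $\mathcal E$ for a product of two whirls directly, recognizing the answer as the corresponding one-row loop Schur function. For the inductive step I would use the Yang--Baxter (braid) relation satisfied by $\mathcal R$ to commute the factor $b_m$ leftward through $b_{m-1}, \dots, b_1$, thereby expressing $\mathcal E_B$ in terms of $\mathcal E_{B'}$ for the first $m-1$ factors together with the new local energies $\mathcal E_{im}$. On the loop Schur side I would match this with the branching rule for loop Schur functions, which expands $s_{(n-1)\delta_{m-1}}$ in the variables indexed by $1, \dots, m$ as a sum over the ways of removing the horizontal strip occupied by the largest letter $m$, leaving tableaux with entries in $1, \dots, m-1$ and shapes $\mu \subseteq (n-1)\delta_{m-1}$.

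A cleaner route, which I would pursue in parallel, is to realize $\mathcal E_B$ directly as the loop-deformed Lindstr\"om--Gessel--Viennot generating function of a planar network, so that non-intersecting path families correspond bijectively to semistandard tableaux of shape $(n-1)\delta_{m-1}$ carrying the content weighting $X_{T(i,j)}^{(i-j)}$; this is the geometric avatar of the Jacobi--Trudi formula for loop Schur functions, and it would give the identity in one stroke once the network is identified with the iterated R-move.

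The main obstacle I expect is exactly this geometric identity. The energy $\mathcal E_B$ is assembled from iterated, nested birational R-moves, whereas the loop Schur function has a transparent tableau/network description, and reconciling the two requires careful control of the cyclic upper indices (contents taken mod $n$) and of the length-$(n-1)$ stretching as they propagate through the recursion. Making the bookkeeping of these content shifts line up — so that the cells contributed at each inductive step carry precisely the superscripts $(i-j)$ demanded by the theorem — is where the real work lies. Once the geometric identity is in hand, the final tropicalization step is immediate from the definition of $\trop$ and the tableau formula for $s_{(n-1)\delta_{m-1}}$.
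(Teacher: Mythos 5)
Your first step---lifting to the birational level and reducing the theorem to the single subtraction-free identity $\DD_B = s^{(0)}_{(n-1)\delta_{m-1}}$, then tropicalizing---is exactly the paper's own reduction (this is Theorem \ref{thm:main2}, with $\trop(\DD_B)=\D_B$). The gap is that this identity \emph{is} the entire content of the theorem, and your proposal does not prove it: you correctly flag it as ``where the real work lies,'' but the plan you sketch there does not go through as stated. Concretely, your induction on $m$ needs two ingredients that are never supplied. First, to express $\mathcal E_B$ in terms of $\mathcal E_{B'}$ and the new local energies $\mathcal E_{im}$ you need closed-form formulas for the nested birational $R$-moves $s_i s_{i+1}\cdots s_{m-2}(b_{m-1})$ and for the local energies evaluated on them; this is precisely the paper's Lemma \ref{lem:tact}, proved by its own induction through the auxiliary functions $\sigma^{(r)}_k$, and it is what makes the product of local energies telescope into $\sigma^{(n)}_{(n-1)(m-1)}\cdots\sigma^{(m-2)}_{(n-1)}$ (Theorem \ref{thm:energyprod}).

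Second, your Schur-side plan---branching on the largest letter $m$---has a structural flaw: removing the horizontal strip occupied by $m$ from $(n-1)\delta_{m-1}$ leaves a sum over many inner shapes $\mu$ that are \emph{not} dilated staircases, so the inductive hypothesis says nothing about the summands $s_\mu(\x_1,\ldots,\x_{m-1})$; moreover this sum must then be matched against the product $\mathcal E_{B'}\cdot\prod_i \mathcal E_{im}$, which in particular requires showing that $s_{(n-1)\delta_{m-2}}(\x_1,\ldots,\x_{m-1})$ divides it---a nontrivial factorization property. The paper circumvents exactly this difficulty by branching on the \emph{smallest} letter $1$ instead (which can only occupy an initial segment of the first row), and then proving the key factorization $\det(B_{i,m}) = s^{(r)}_{(n-1)\delta_{m}/(i)} = \tau^{(r-i-1)}_{(n-1)m-i+1}\,\det(A_m)$ (Proposition \ref{P:detBim}) by a Jacobi--Trudi/kernel argument: the vector of maximal minors of the extended Jacobi--Trudi matrix $B_m$ spans its one-dimensional kernel, which by Proposition \ref{P:Bzero} also contains an explicit vector of $\tau$'s. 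Without either this factorization or Lemma \ref{lem:tact}---that is, without the intermediate functions $\tau^{(r)}_k$ and $\sigma^{(r)}_k$ through which \emph{both} sides factor---your induction does not close, and the ``bookkeeping of content shifts'' you defer is not bookkeeping but the theorem itself. The same criticism applies to your alternative Lindstr\"om--Gessel--Viennot route, which is stated as a hope rather than carried out.
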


In the physical interpretation, each $b_i$ represents a particle, and the intrinsic energy function $\D_B(b)$ is defined as the sum of ${m \choose 2}$ local energies of interactions of particles.  Theorem \ref{thm:main} thus has the following interpretation: each tableau $T$ encodes a way for $m$ particles to interact {\it simultaneously}, and intrinsic energy is equal to the minimum of these.

In \cite{LP}, motivated by the study of total positivity for loop groups, we introduced a generalization of the ring of symmetric functions, called {\it loop symmetric functions} and denoted $\LSym$.  In particular, we defined distinguished elements of $\LSym$ called loop Schur functions (see Section \ref{sec:LSym}). It is shown in \cite{LP} that the algebra homomorphisms from $\LSym$ to $\mathbb R$ taking nonnegative values on (skew) loop Schur functions are in bijection with totally nonnegative elements of the formal loop group.

Recall that the tropicalization of a subtraction-free polynomial $f$, is obtained by replacing multiplication by addition, and replacing addition by taking minimums.  Theorem \ref{thm:main} is equivalent to

\begin{theorem}\label{thm:main2}
The function $\D_B$ is the tropicalization of the loop Schur function $s^{(0)}_{(n-1)\delta_{m-1}}$ in the variables
$\{\x_i^{(s)}\}$.
\end{theorem}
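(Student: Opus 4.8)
The plan is to deduce Theorem \ref{thm:main2} directly from Theorem \ref{thm:main}, since the only thing separating the two statements is the passage from a positive polynomial to its tropicalization. First I would recall from Section \ref{sec:LSym} the definition of the loop Schur function, which presents $s^{(0)}_{\lambda}$ as a subtraction-free generating function over tableaux: it is the sum, over all semistandard tableaux $T$ of shape $\lambda$ with entries in $\{1,\ldots,m\}$, of the monomials $\prod_{(i,j)\in\lambda}\x_{T(i,j)}^{(i-j)}$, where the lower index records the entry, the superscript content $i-j$ is read modulo $n$, and the superscript $(0)$ on $s^{(0)}_\lambda$ fixes the content of the top-left box $(1,1)$ to be $1-1=0$. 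The key structural point is that this expansion is manifestly subtraction-free: every monomial occurs with coefficient $+1$, so the tropicalization procedure applies termwise with no cancellation to worry about.

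The main step is then simply to commute the definition of the loop Schur function with tropicalization. Replacing each product $\prod_{(i,j)}\x_{T(i,j)}^{(i-j)}$ by the corresponding sum $\sum_{(i,j)}x_{T(i,j)}^{(i-j)}$, and replacing the outer sum over tableaux by a minimum, yields
\[
\trop\!\left(s^{(0)}_{\lambda}\right)=\min_{T}\left\{\sum_{(i,j)\in\lambda}x_{T(i,j)}^{(i-j)}\right\}.
\]
Specializing to $\lambda=(n-1)\delta_{m-1}$ reproduces verbatim the right-hand side of the formula in Theorem \ref{thm:main}, which by that theorem equals $\D_B(b)$. Hence $\D_B=\trop\!\bigl(s^{(0)}_{(n-1)\delta_{m-1}}\bigr)$, as claimed.

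The main obstacle is entirely one of bookkeeping rather than of analysis: I must check that the content convention indexing the loop variables in the definition of $s^{(0)}_\lambda$ agrees precisely with the superscript $(i-j)$ appearing in Theorem \ref{thm:main}, including the base offset recorded by the superscript $(0)$ and the reduction of $i-j$ modulo $n$. Concretely, I would verify that for the staircase $(n-1)\delta_{m-1}$ the contents $i-j$ sweep out the residues in $\Z/n\Z$ exactly as demanded by the energy formula, and that the tropical variable $x_i^{(s)}$ is indeed the image of $\x_i^{(s)}$ under tropicalization. Once Theorem \ref{thm:main} is granted, no further input is required: the full content of Theorem \ref{thm:main2} is the observation that the energy formula is the tropical shadow of a positive tableau sum.
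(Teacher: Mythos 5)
Your proposal is circular relative to the logical structure of this paper. You deduce Theorem \ref{thm:main2} from Theorem \ref{thm:main}, but Theorem \ref{thm:main} has no independent proof here: the flow of implication in the paper runs in exactly the opposite direction. The paper proves Theorem \ref{thm:main2} first, as an immediate consequence of Theorem \ref{thm:stair} (the factorization of the staircase loop Schur function $s^{(r)}_{(n-1)\delta_{m-1}}$ into the product of the $\sigma$ functions, proved in Section \ref{sec:proof}) and Theorem \ref{thm:energyprod} (the identical product formula for the rational energy $\DD_B$, proved via Lemma \ref{lem:tact} by induction along the birational $R$-matrix action). Together these give the polynomial identity $\DD_B = s^{(0)}_{(n-1)\delta_{m-1}}$, and since $\trop(\DD_B)=\D_B$ by construction, tropicalizing yields Theorem \ref{thm:main2}. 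Theorem \ref{thm:main} is then obtained from Theorem \ref{thm:main2} by precisely the monomial-expansion-plus-tropicalization argument you describe. So the step you carry out is correct as far as it goes, but it only establishes the \emph{equivalence} of the two theorems --- which the paper already records in the sentence ``Theorem \ref{thm:main} is equivalent to Theorem \ref{thm:main2}'' --- not the truth of either one.

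What is missing is all of the substantive content: an argument connecting the definition \eqref{E:global} of $\D_B$ as a sum of ${m \choose 2}$ local coenergies $\H_{B_i,B_j}$ (evaluated after applying combinatorial $R$-matrices) to the loop Schur function. In the paper this is supplied by Lemma \ref{lem:tact} and Theorem \ref{thm:energyprod} on the crystal side, and by the whole of Section \ref{sec:proof} on the symmetric-function side (the Jacobi--Trudi matrices $A_m$ and $B_m$, the recursion of Lemma \ref{L:taurecursion}, and Proposition \ref{P:detBim}). A blind proof of Theorem \ref{thm:main2} must either reproduce this bridge or give some other independent proof of Theorem \ref{thm:main}; taking Theorem \ref{thm:main} as ``granted'' assumes exactly what needs to be proven.
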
  
Theorem \ref{thm:main2} is an immediate consequence of Theorems \ref{thm:stair} and \ref{thm:energyprod} below.

Theorems \ref{thm:main} and \ref{thm:main2} are {\it canonical} in the sense that they correspond to the monomial expansion of a polynomial.  That a piecewise-linear expression for $\D_B(b)$ exists is already clear from the literature.  However, the fact a subtraction-free formula exists (or equivalently the rational version $\DD_B(b)$ of energy is a polynomial with positive coefficients) is not apparent from the definition of $\D_B(b)$, even though the latter takes nonnegative values.  (See also Remark \ref{rem:irred}.)

\begin{example}
 Let $n=2$ and $m=3$. Then \begin{align*}\D_B(b) &= \min(x_1^{(1)}+x_1^{(2)}+x_2^{(1)}, x_2^{(1)}+x_1^{(2)}+x_2^{(1)}, x_3^{(1)}+x_1^{(2)}+x_2^{(1)},  x_1^{(1)}+x_1^{(2)}+x_3^{(1)},\\ &x_2^{(1)}+x_1^{(2)}+x_3^{(1)}, x_3^{(1)}+x_1^{(2)}+x_3^{(1)}, x_2^{(1)}+x_2^{(2)}+x_3^{(1)}, x_3^{(1)}+x_2^{(2)}+x_3^{(1)})\end{align*}
corresponding to the following tableaux of shape $\delta_2=(2,1)$:
$$
\tableau[sY]{1&1\\2} \;\;\; \tableau[sY]{1&2\\2} \;\;\; \tableau[sY]{1&3\\2} \;\;\; \tableau[sY]{1&1\\3} \;\;\;
\tableau[sY]{1&2\\3} \;\;\; \tableau[sY]{1&3\\3} \;\;\; \tableau[sY]{2&2\\3} \;\;\; \tableau[sY]{2&3\\3}
$$
\end{example}

We use in our calculations a birational analogue of the {\it {combinatorial $R$-matrix}}.  It was previously studied by Kirillov \cite{K} in the context of the Robinson-Schensted algorithm, by Noumi-Yamada \cite{NY, Y} in the context of discrete Painl\'{e}ve systems, by Etingof \cite{E} in the context Yang-Baxter equations, by Berenstein-Kazhdan \cite{BK} in the context of geometric crystals, and by the authors \cite{LP} in the context of total positivity of loop groups.

\section{Loop symmetric functions}\label{sec:LSym}
Fix an integer $n > 1$ throughout.
\subsection{Loop Schur functions}
Let $\left(\x_i^{(r)}\right)_{1 \leq i \leq m, \; r \in \Z/n\Z}$ be a rectangular array of variables. We recall from \cite{LP} the definition of the ring of {\it {loop symmetric functions}}\footnote{In \cite{LP}, there are two such rings: the ring of whirl loop symmetric functions, and the ring of curl loop symmetric functions.  We use the former here.  Furthermore, we only use finitely many variables here.} in the variables $\x_i^{(r)}$, denoted $\LSym_m$.  A detailed study of loop symmetric functions will appear in \cite{LP2}.

For $k \geq 1$ and $r \in \Z/n\Z$, define the {\it {loop elementary symmetric functions}} and {\it {loop complete homogenous symmetric functions}} by
\begin{align*}
e_k^{(r)}(\x_1,\x_2,\ldots,\x_m) &= \sum_{1 \leq i_1 < i_2 < \cdots < i_k\leq m} \x_{i_1}^{(r)} \x_{i_2}^{(r+1)} \cdots \x_{i_k}^{(r+k-1)} \\
h_k^{(r)}(\x_1,\x_2,\ldots,\x_m) &= \sum_{1 \leq i_1 \leq i_2 \leq \cdots \leq i_k\leq m} \x_{i_1}^{(r)} \x_{i_2}^{(r-1)} \cdots \x_{i_k}^{(r-k+1)} 
\end{align*}
By convention, $e_k^{(r)}=h_k^{(r)} = 0$ for $k < 0$, and $e_0^{(r)}=h_0^{(r)}=1$.  Note that $e_k^{(r)} = 0$ for $k > m$. We call the upper index the {\it color}.  When all $n$ colors are identified, that is $\x_i^{(s)}=\x_i^{(s')}$ for all $i$ and $s,s'\in \Z/n\Z$, these functions specialise to the usual elementary and complete homogenous symmetric functions \cite{EC2}.  We define $\LSym_m$ to be the ring generated by the $e_k^{(r)}$.  Although it is not immidiately obvious, the $h_k^{(r)}$ lie in $\LSym_m$. In fact, both the $e_k^{(r)}$ and the $h_k^{(r)}$ are instances of distinguished elements of $\LSym_m$ called {\it {loop Schur functions}}.  

A square $s = (i,j)$ in the $i$-th row and $j$-th column has {\it content} $c(s)=i-j$.  We caution that our notion of content is the negative of the usual one.  Let $\rho/\nu$ be a skew shape.   Recall that a semistandard Young tableaux $T$ with shape
$\rho/\nu$ is a filling of each square $s \in \rho/\nu$ with an
integer $T(s) \in \Z_{> 0}$ so that the rows are weakly-increasing, and columns are increasing.  For $r\in \Z/n\Z$, the $r$-weight $\x^T$ of a tableaux $T$ is given by $\x^T = \prod_{s \in \rho/\nu}\x_{T(s)}^{(c(s)+r)}$.

We shall draw our shapes and tableaux in English notation:
$$
\tableau[sY]{\bl \circ&\bl \circ&\bl \circ&\bl \circ&\bl \circ&&&&\\\bl \circ&\bl \circ&\bl \circ&&&&&\\\bl \circ&\bl \circ&\bl \circ&&&} \qquad
\tableau[sY]{\bl &\bl&1&1&1&3\\1&2&2&3&4\\3&3&4}
$$
%

For $n = 3$ the $0$-weight of the above tableau is $(\x_1^{(1)})^2 (\x_3^{(1)})^3
\x_1^{(2)} \x_2^{(2)} \x_3^{(2)} \x_1^{(3)} \x_2^{(3)} (\x_4^{(3)})^2.$
We define the {\it loop (skew) Schur function} by
$$
s^{(r)}_{\rho/\nu}({\x}) = \sum_{T} \x^T
$$
where the summation is over all semistandard Young tableaux of
(skew) shape $\rho/\nu$.  We have the following analog of the Jacobi-Trudi formula.

\begin{thm} \cite[Theorem 7.6]{LP}
We have $s^{(r)}_{\lambda'/\mu'} = \det(e_{\lambda_i-\mu_j-i+j}^{(r-j+1+\mu_j)})$.
\end{thm}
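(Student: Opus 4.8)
The plan is to prove this via the Lindstr\"om--Gessel--Viennot (LGV) lemma, adapting the classical lattice-path proof of the dual (N\"agelsbach--Kostka) Jacobi--Trudi identity while tracking the colors carefully. First I would expand the determinant over the symmetric group,
$$\det\!\left(e^{(r-j+1+\mu_j)}_{\lambda_i-\mu_j}\right) = \sum_{\sigma} \mathrm{sgn}(\sigma) \prod_{i} e^{(r-\sigma(i)+1+\mu_{\sigma(i)})}_{\lambda_i-\mu_{\sigma(i)}},$$
and reinterpret each factor as a generating function over lattice paths. The point is that $e^{(s)}_k$ is itself the loop Schur function of a single column of length $k$ whose top box has color $s$: the sum $\sum_{i_1<\cdots<i_k}\x_{i_1}^{(s)}\x_{i_2}^{(s+1)}\cdots\x_{i_k}^{(s+k-1)}$ records a strictly increasing column in which the color advances by one from each box to the one below it. Encoding such a column as an up-right path, I would place sources $A_1,\dots,A_\ell$ and sinks $B_1,\dots,B_\ell$ (with $\ell$ the size of the matrix) so that a path from $A_j$ to $B_i$ traces a column of length $\lambda_i-\mu_j$ occupying the diagonals (contents) $\mu_j+1-j,\ \mu_j+2-j,\dots$, whence its weight is exactly $e^{(r-j+1+\mu_j)}_{\lambda_i-\mu_j}$.

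The decisive design choice, and the feature that distinguishes the loop case from the ordinary one, is that the color of each edge is attached to the diagonal (content) of the ambient lattice rather than to the path passing through it. With this convention a path from $A_j$ to $B_i$ automatically picks up colors $r-j+1+\mu_j,\ r-j+2+\mu_j,\dots$ as it descends, precisely the sequence prescribed by the content $c(s)=i-j$ and the base $r$. A non-crossing family of paths then reads off a filling of $\lambda'/\mu'$ that is strictly increasing down each column (from the strictness inside each $e^{(s)}_k$) and weakly increasing along rows (from the non-crossing condition between adjacent paths), i.e.\ a semistandard tableau $T$ of shape $\lambda'/\mu'$. Because the sources and sinks are ordered compatibly, the only permutation admitting a non-crossing family is the identity, and the product of the path weights is $\prod_{s}\x_{T(s)}^{(c(s)+r)}=\x^T$, so the non-crossing families sum to $s^{(r)}_{\lambda'/\mu'}$.

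It remains to cancel every intersecting family. Here I would use the standard sign-reversing involution: locate the first lattice point where two paths meet and swap their tails, which alters $\mathrm{sgn}(\sigma)$ by a transposition and pairs each intersecting family with a distinct partner. The main obstacle, and the only place the argument genuinely departs from the classical proof, is checking that this tail-swap preserves the colored monomial. This is exactly where attaching colors to diagonals pays off: swapping tails does not change the multiset of lattice edges traversed, only their distribution between the two paths, so the total product of colored variables is unchanged and the two families cancel. After all intersecting families cancel, only the identity-permutation non-crossing families survive, yielding $\sum_T \x^T = s^{(r)}_{\lambda'/\mu'}$. The one bookkeeping subtlety to verify with care is that the color base $r-j+1+\mu_j$ of the $(i,j)$ entry is consistent with the content of the topmost box of column $i$ of $\lambda'/\mu'$ (for $j=i$ this base equals $r+(\mu_i+1-i)$, i.e.\ $r$ plus that content); establishing this diagonal consistency is what forces the non-crossing weights to land on the nose.
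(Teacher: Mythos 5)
First, a point of order: the paper contains no proof of this statement to compare against --- it is quoted from \cite{LP} (Theorem 7.6 there) and used as a black box. So your argument has to be judged on its own merits. Your central design choice is exactly the right one: attaching the color of a colored step to the diagonal (content) of the ambient lattice, rather than to the path, makes every edge weight position-determined, so the Lindstr\"om--Gessel--Viennot tail-swap preserves the colored monomial and the classical N\"agelsbach--Kostka lattice-path proof does extend to the loop setting. That is the genuine crux, and you identified it.

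However, your setup as written is internally inconsistent, and the inconsistency traces back to a typo in the statement you were handed. You ask that the path from $A_j$ to $B_i$ have $\lambda_i-\mu_j$ colored steps \emph{and} start on diagonal $\mu_j+1-j$; but then it terminates on diagonal $\lambda_i-j$, which depends on $j$, so the sink $B_i$ cannot be a fixed lattice point. Fixed sources and sinks --- which is what path-independent edge weights require --- force $A_j$ onto diagonal $\mu_j-j$ and $B_i$ onto diagonal $\lambda_i-i$, so the $(i,j)$ path has exactly $\lambda_i-\mu_j-i+j$ colored steps, and what your construction actually proves is
$$
s^{(r)}_{\lambda'/\mu'} \;=\; \det\left(e_{\lambda_i-\mu_j-i+j}^{(r-j+1+\mu_j)}\right).
$$
This corrected subscript is the right one: it agrees with the paper's own example matrix $A_4$ (whose $(1,2)$ entry is $e_4^{(r-1)}$, not $e_3^{(r-1)}$) and with the classical dual Jacobi--Trudi formula, whereas the statement as literally quoted is false; for instance, for $\lambda=(1,1)$, $\mu=\emptyset$ it asserts
$$
\det\begin{pmatrix} e_1^{(r)} & e_1^{(r-1)} \\ e_1^{(r)} & e_1^{(r-1)} \end{pmatrix} \;=\; s^{(r)}_{(2)} \;=\; h_2^{(r)},
$$
i.e.\ $0=h_2^{(r)}$. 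Note that your final ``bookkeeping'' check was carried out only at $j=i$, which is precisely where the missing term $-i+j$ vanishes; checking any off-diagonal entry would have exposed the discrepancy. Once the subscript is corrected (or the typo flagged), your lattice-path argument is sound and constitutes a legitimate self-contained proof.
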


\begin{prop} \label{prop:eh}
 For any $k>0$ we have $$e_0^{(r)} h_k^{(r-1)} - e_1^{(r-1)} h_{k-1}^{(r-2)} + e_2^{(r-2)} h_{k-2}^{(r-3)} - \ldots \pm e_k^{(r-k)} h_{0}^{(r-k-1)} = 0.$$
\end{prop}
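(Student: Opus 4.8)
The plan is to prove this as a monomial‑cancellation (sign‑reversing involution) identity: I will expand the left‑hand side into monomials in the $\x_i^{(s)}$ and show that the coefficient of each monomial is zero. Writing the general summand as $(-1)^j e_j^{(r-j)}h_{k-j}^{(r-j-1)}$ for $0\le j\le k$, the first thing I would do is read off the colors appearing. The factor $e_j^{(r-j)}$ contributes the $j$ colors $r-j,r-j+1,\dots,r-1$ (with the tableau index increasing as the color increases), while $h_{k-j}^{(r-j-1)}$ contributes the $k-j$ colors $r-j-1,r-j-2,\dots,r-k$ (with the index increasing as the color decreases). The key bookkeeping observation is that, as $j$ varies, these two color‑blocks are always complementary: together they use each of the $k$ consecutive colors $r-1,r-2,\dots,r-k$ exactly once. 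Hence every monomial occurring anywhere in the sum has the shape $\x^{w}:=\prod_{\ell=1}^{k}\x_{p_\ell}^{(r-\ell)}$ for a word $w=(p_1,\dots,p_k)\in\{1,\dots,m\}^{k}$, where $p_\ell$ is the index carrying color $r-\ell$.

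Next I would translate the statement ``$\x^w$ arises from the $j$‑th summand'' into a purely order‑theoretic condition on $w$. A split at $j$ assigns the prefix $(p_1,\dots,p_j)$ to $e_j^{(r-j)}$ and the suffix $(p_{j+1},\dots,p_k)$ to $h_{k-j}^{(r-j-1)}$. Because $e$ is strictly increasing in the index as the color increases, and the color $r-\ell$ increases as $\ell$ decreases, the prefix must satisfy $p_1>p_2>\cdots>p_j$; dually, the weak‑increase condition defining $h$ forces $p_{j+1}\le p_{j+2}\le\cdots\le p_k$. Thus, for a fixed $w$, the set $S(w)$ of indices $j$ producing the monomial $\x^w$ is exactly the set of $j$ for which the prefix is strictly decreasing and the suffix is weakly increasing, and this set depends on $w$ alone.

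The heart of the argument is then the combinatorial lemma that $S(w)$ is either empty or a pair of consecutive integers $\{a-1,a\}$. Intuitively, a split exists only when $w$ is \emph{valley‑shaped}, i.e.\ strictly decreasing up to some position and weakly increasing thereafter, and in that case the bottom of the valley can be cut in exactly two ways, giving two split values differing by $1$. I would prove this by letting $a$ be the length of the maximal strictly decreasing prefix and $b$ the starting position of the maximal weakly increasing suffix, verifying that $S(w)=\{\,j:b-1\le j\le a\,\}$, and then arguing from the descent pattern of $w$ that $S(w)\neq\varnothing$ forces $a=b$, whence $S(w)=\{a-1,a\}$. Granting the lemma, the map interchanging the two splits $a-1\leftrightarrow a$ of each valley word is a fixed‑point‑free, sign‑reversing involution that preserves $\x^w$, so $(-1)^{a-1}+(-1)^{a}=0$ annihilates the coefficient of $\x^w$, while words with $S(w)=\varnothing$ contribute nothing; summing over all $w$ yields the identity. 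I would stress that this cancellation takes place \emph{within} a fixed word $w$, so reducing colors modulo $n$ causes no difficulty even when $k\ge n$ and several words collapse onto the same monomial.

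The step I expect to be the genuine obstacle is the lemma, and within it the off‑by‑one bookkeeping: pinning down exactly how the strict‑versus‑weak inequalities defining $e_k^{(r)}$ and $h_k^{(r)}$ interact with the color reversal $\ell\mapsto r-\ell$, and checking that the valid split‑set is nonempty \emph{precisely} for valley words. Everything else is formal. I would also note, as motivation, that this identity is the loop analogue of the classical $\sum_i(-1)^i e_i h_{k-i}=0$, which follows instantly from $H(t)E(-t)=1$; the color shifts here obstruct a naive generating‑function factorization (the $e$‑ and $h$‑words read their colors in opposite directions), which is exactly why I would carry out the cancellation by the explicit involution above rather than by a product of univariate series.
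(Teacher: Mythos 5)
Your proof is correct and is essentially the paper's own argument in different packaging: your valley words with their two consecutive splits $\{a-1,a\}$ are exactly the semistandard hook tableaux into which the paper glues a term of $e_i^{(r-i)}$ (viewed as a column) and a term of $h_{k-i}^{(r-i-1)}$ (viewed as a row), and your sign-reversing involution swapping the two splits is precisely the paper's telescoping cancellation of the two hook loop Schur functions in $e_i^{(r-i)} h_{k-i}^{(r-i-1)} = s^{(r-i-1)}_{\rho_{k-i}} + s^{(r-i)}_{\rho_{k-i+1}}$ across consecutive summands. The only difference is that the paper names the intermediate hook Schur functions while you verify the same cancellation directly at the level of words, supplying explicitly the split-set lemma that the paper asserts in one sentence.
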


\begin{proof}
 Let $\rho_i$ denote the hook shape $(k-i, 1, \ldots, 1)$ of size $k$. Then $e_i^{(r-i)} h_{k-i}^{(r-i-1)} = s_{\rho_{k-i}}^{(r-i-1)} + s_{\rho_{k-i+1}}^{(r-i)}$ is the sum of two loop Schur functions (one of which is zero if $i=0$ or $i=k$). This holds since any two terms in $e_i^{(r-i)}$ and $h_{k-i}^{(r-i-1)}$, viewed as tableaux of column and row shapes, fit together to give a semistandard tableau of one of the two hook shapes, depending on the entry in their smallest boxes. It is clear that as we sum over all $i$, all the hook shape loop Schur functions cancel out. 
\end{proof}

\subsection{Staircases}

For $k \geq 1$ and $r \in \Z/n\Z$, define
$$
\tau^{(r)}_k(\x_1,\x_2,\ldots,\x_m) = \sum_{I = \{i_1 \leq i_2 \leq \cdots \leq i_k\}} \x_{i_1}^{(r)} \x_{i_2}^{(r-1)} \cdots \x_{i_k}^{(r-k+1)}
$$
where the summation is over multisets $I \subset \{1,2,3,\ldots,m\}$ such that no number occurs more than $n-1$ times.  Note that if $k > m(n-1)$ we have $\tau^{(r)}_k = 0$.
It can be shown that $\tau^{(r)}_k$ lies in $\LSym_m$, but we shall not need it for what follows.

\begin{lem}
\label{L:tau}
We have
$$
\tau^{(r)}_k(\x_1,\ldots,\x_m) = \sum_{i=0}^\infty(-1)^i h^{(r)}_{k-in}e_i\left(\prod_{s\in \Z/n\Z} \x^{(s)}_1,\ldots,\prod_{s \in \Z/n\Z}\x^{(s)}_m\right),
$$
where the $e_i$ in the above formula denotes the usual elementary symmetric function.
\end{lem}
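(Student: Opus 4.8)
The plan is to read the right-hand side as a signed count over pairs of combinatorial objects and exhibit a weight-preserving merging map whose fibers produce an inclusion--exclusion cancellation, leaving exactly the terms of $\tau_k^{(r)}$. First I would expand each summand. A term of $h^{(r)}_{k-in}$ is the monomial attached to a multiset $t_1 \leq \cdots \leq t_{k-in}$, namely $\x_{t_1}^{(r)}\x_{t_2}^{(r-1)}\cdots$, where the square in sorted position $p$ carries color $r-p+1 \in \Z/n\Z$; a term of the ordinary $e_i\left(\prod_{s}\x_1^{(s)},\ldots,\prod_s\x_m^{(s)}\right)$ is $\prod_{v\in S}y_v$ for an $i$-element subset $S\subseteq\{1,\ldots,m\}$, where I write $y_v=\prod_{s\in\Z/n\Z}\x_v^{(s)}$. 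Thus the right-hand side is a sum over pairs $(T,S)$ of such a multiset $T$ and a set $S$, weighted by $(-1)^i=(-1)^{|S|}$.

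Next I would define the merging map sending $(T,S)$ to the multiset $U=T\uplus(n\cdot S)$, obtained by inserting $n$ extra copies of each element of $S$; note $|U|=(k-ni)+ni=k$. The heart of the argument is the claim that the $h^{(r)}_k$-monomial of $U$ equals the product of the weight of $T$ with $\prod_{v\in S}y_v$, i.e.\ equals the weight of $(T,S)$. To see this, group the squares of sorted $U$ by value: the copies of a value $v$ occupy consecutive positions starting at $p_v+1$, where $p_v=\sum_{w<v}\mathrm{mult}_U(w)$, and they contribute $n[v\in S]$ more factors than the corresponding block in $T$. The key observation is that $p_v$ and the starting position $q_v$ of $v$ in $T$ satisfy $q_v\equiv p_v \pmod n$, since every inserted block changes later positions by a multiple of $n$; because colors live in $\Z/n\Z$ this means the two blocks begin at the \emph{same} color. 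Hence the blocks agree except that, for $v\in S$, $U$ carries $n$ additional consecutive colors, and $n$ consecutive colors mod $n$ realize exactly $\prod_{s\in\Z/n\Z}\x_v^{(s)}=y_v$. This color-alignment verification is the step I expect to be the main obstacle, as it is where the restriction ``$n-1$'' and the full color-product $y_v$ enter, and it requires care with the rotating colors.

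Finally I would compute the fibers of the merge. Given $U$, a pair $(T,S)$ maps to it precisely when $T=U\setminus(n\cdot S)$, which is a multiset exactly when $S\subseteq A(U):=\{v:\mathrm{mult}_U(v)\geq n\}$. Since the shared weight depends only on $U$, the right-hand side rearranges as $\sum_U \mathrm{wt}(U)\sum_{S\subseteq A(U)}(-1)^{|S|}$, and the inner sum is the standard binomial alternating sum, equal to $1$ if $A(U)=\emptyset$ and $0$ otherwise. Thus only those $U$ with all multiplicities at most $n-1$ survive, each with coefficient $1$ and with $\mathrm{wt}(U)$ equal to its defining $\tau^{(r)}_k$-monomial, giving $\sum_{i}(-1)^i h^{(r)}_{k-in}e_i=\tau^{(r)}_k$. (Equivalently, one can package the computation as the generating-function identity $\sum_k u^k h^{(r)}_k=\prod_j(1-u^ny_j)^{-1}\sum_k u^k\tau^{(r)}_k$ and extract the coefficient of $u^k$, but the bijective form above makes the color bookkeeping transparent.)
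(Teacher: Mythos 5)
Your proof is correct and is essentially the paper's own argument: the paper also computes, for each monomial of $h^{(r)}_k$ indexed by a multiset $I$, its coefficient on the right-hand side as the alternating sum $\sum_{K\subseteq J}(-1)^{|K|}$ over subsets $K$ of the set $J$ of values occurring at least $n$ times, which vanishes unless $J=\emptyset$ --- exactly your fiber computation for the merging map. The only difference is one of detail: you explicitly verify the color-alignment/weight-preservation step (that deleting $n$ copies of a value shifts positions by multiples of $n$ and that $n$ consecutive colors contribute $\prod_{s\in\Z/n\Z}\x_v^{(s)}$), which the paper leaves implicit.
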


\begin{proof}
 Let $\x_{i_1}^{(r)} \x_{i_2}^{(r-1)} \cdots \x_{i_k}^{(r-k+1)}$ be a term in $h^{(r)}_{k}$. Let $J \subseteq I$ be the set of indexes which occur in $I = \{i_1 \leq i_2 \leq \cdots \leq i_k\}$ more than $n-1$ times. Then the coefficient of this term on the right is equal to $\sum_{K \subseteq J} (-1)^{|K|}$. This is equal to $1$ if $|J|=0$ and to $0$ otherwise. 
\end{proof}

\begin{lem}\label{L:taurecursion}
For each $k$, we have
$$
\sum_{i=0}^\infty (-1)^{i} e_i^{(r-i)}(\x_1,\ldots,\x_m)\tau_{k-i}^{(r-i-1)}(\x_1,\ldots,\x_m) = 0.
$$
\end{lem}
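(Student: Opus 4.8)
The plan is to reduce the claim to Proposition~\ref{prop:eh} by feeding in the expansion of $\tau$ supplied by Lemma~\ref{L:tau}. Write $y_l = \prod_{s \in \Z/n\Z}\x_l^{(s)}$ and abbreviate $E_j = e_j(y_1,\ldots,y_m)$ for the ordinary elementary symmetric functions occurring there. By Lemma~\ref{L:tau} applied with $k$ replaced by $k-i$ and $r$ by $r-i-1$, one has $\tau_{k-i}^{(r-i-1)} = \sum_{j\ge 0}(-1)^j h_{(k-jn)-i}^{(r-i-1)}E_j$, so substituting into the left-hand side of the recursion turns it into the double sum
$$
\sum_{i\ge 0}(-1)^i e_i^{(r-i)}\tau_{k-i}^{(r-i-1)} = \sum_{i,j\ge 0}(-1)^{i+j} e_i^{(r-i)} h_{(k-jn)-i}^{(r-i-1)} E_j .
$$

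Next I would interchange the order of summation, which is legitimate since for fixed $k$ only finitely many summands are nonzero: $e_i^{(r-i)}=0$ for $i>m$, and any $h$ with negative lower index vanishes. Grouping by $j$ and pulling the factor $(-1)^j E_j$ out front gives
$$
\sum_{j\ge 0}(-1)^j E_j\left(\sum_{i\ge 0}(-1)^i e_i^{(r-i)} h_{(k-jn)-i}^{(r-i-1)}\right).
$$
The inner sum is precisely the left-hand side of Proposition~\ref{prop:eh} with $k$ replaced by $k-jn$ (and the same $r$). Hence by that proposition it vanishes whenever $k-jn>0$, and it also vanishes when $k-jn<0$, since then every $h$ appearing has a strictly negative lower index. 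Thus every term of the outer sum cancels except possibly the one with $k-jn=0$.

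The crux of the argument, and the step I expect to be the main obstacle, is exactly this boundary term $j=k/n$, which occurs precisely when $n \mid k$. Proposition~\ref{prop:eh} guarantees vanishing only for a strictly \emph{positive} index, and at $k-jn=0$ the inner sum does not vanish but collapses to its single surviving summand $e_0^{(r)}h_0^{(r-1)}=1$. Consequently the outer sum telescopes to $0$ exactly when $n\nmid k$, which is the range relevant to the staircase computation that follows; when $n\mid k$ the same bookkeeping leaves the residual term $(-1)^{k/n}E_{k/n}=(-1)^{k/n}e_{k/n}(y_1,\ldots,y_m)$, so one must confirm against the downstream application which form of the recursion is actually invoked. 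Pinning down this boundary behavior is the only genuine subtlety; the remainder is the purely formal substitution and regrouping described above.
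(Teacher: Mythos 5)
Your argument is the paper's own proof: the paper justifies this lemma with the single sentence ``Follows immediately from Proposition~\ref{prop:eh} and Lemma~\ref{L:tau},'' and the substitution-and-regrouping you perform is exactly that deduction spelled out. For $n \nmid k$ your proof is complete and correct.

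The boundary term you flag is a genuine defect of the \emph{statement}, not of your argument. When $n \mid k$ and $k/n \leq m$, the residue $(-1)^{k/n}e_{k/n}\bigl(\prod_s\x_1^{(s)},\ldots,\prod_s\x_m^{(s)}\bigr)$ is a nonzero polynomial, so the identity ``for each $k$'' is false as written: for instance $n=2$, $m=1$, $k=2$ gives
$$e_0^{(r)}\tau_2^{(r-1)}-e_1^{(r-1)}\tau_1^{(r-2)}+e_2^{(r-2)}\tau_0^{(r-3)} = -\x_1^{(r-1)}\x_1^{(r-2)} = -\x_1^{(0)}\x_1^{(1)} \neq 0,$$
in agreement with your residual formula. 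The confirmation you defer to the downstream application does work out favorably: the lemma is invoked only in Proposition~\ref{P:Bzero}, where pairing row $j$ of $B_m$ with $\T$ produces the recursion with $k=(n-1)m+\lambda_j-j+1$, $\lambda$ being the conjugate of the staircase $(n-1)\delta_{m-1}$ (extended via Lemma~\ref{lem:translate}); writing $j=(n-1)q+s$ with $1\leq s\leq n-1$ one has $\lambda_j=m-1-q$, hence $k=nm-nq-s\equiv -s\not\equiv 0\pmod n$. So only the case you actually proved is ever used, and the paper's results stand provided the lemma is given the hypothesis $n\nmid k$ (or its conclusion is amended by your residual term). Identifying this, rather than the formal manipulation, is indeed the substantive content here.
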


\begin{proof}
 Follows immidiately from Proposition \ref{prop:eh} and Lemma \ref{L:tau}.
\end{proof}

Define $\sigma^{(r)}_k(\x_1,\x_2,\ldots,\x_m) = \sum_{i=0}^k \x_1^{(r)}\x_1^{(r-1)}\cdots \x_1^{(r-i+1)} \tau^{(r-i)}_{k-i}(\x_2,\ldots,\x_m)$.

\begin{thm}\label{thm:stair}
For $m \geq 2$ and $r \in \Z/n\Z$, we have
\begin{align*}
&s^{(r)}_{(n-1)\delta_{m-1}}(\x_1,\x_2,\ldots,\x_m) \\&= \sigma^{(r)}_{(n-1)(m-1)}(\x_1,\ldots,\x_m) \sigma^{(r+1)}_{(n-1)(m-2)}(\x_2,\ldots,\x_{m})
\cdots
\sigma^{(r+m-2)}_{(n-1)}(\x_{m-1},\x_m).
\end{align*}
\end{thm}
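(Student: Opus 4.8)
The plan is to argue by induction on $m$. In the base case $m=2$ the product reduces to the single factor $\sigma^{(r)}_{n-1}(\x_1,\x_2)$, while the left-hand side is the loop Schur function of a single row of length $n-1$; since a row of length $n-1$ can contain any value at most $n-1$ times, the truncation defining $\tau$ (hence $\sigma$) is vacuous, so $\sigma^{(r)}_{n-1}(\x_1,\x_2)=h^{(r)}_{n-1}(\x_1,\x_2)=s^{(r)}_{(n-1)}(\x_1,\x_2)$. For the inductive step I first note that the last $m-2$ factors,
\[
\sigma^{(r+1)}_{(n-1)(m-2)}(\x_2,\ldots,\x_m)\cdots\sigma^{(r+m-2)}_{(n-1)}(\x_{m-1},\x_m),
\]
are literally the right-hand side of Theorem~\ref{thm:stair} for the shape $(n-1)\delta_{m-2}$ in the variables $\x_2,\ldots,\x_m$ with base color $r+1$; by the inductive hypothesis they multiply to $s^{(r+1)}_{(n-1)\delta_{m-2}}(\x_2,\ldots,\x_m)$. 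Thus everything reduces to the single branching identity
\[
s^{(r)}_{(n-1)\delta_{m-1}}(\x_1,\ldots,\x_m)=\sigma^{(r)}_{(n-1)(m-1)}(\x_1,\ldots,\x_m)\,s^{(r+1)}_{(n-1)\delta_{m-2}}(\x_2,\ldots,\x_m). \qquad (\star)
\]

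To prove $(\star)$ I would compare the coefficients of each monomial in $\x_1$. In a semistandard filling of $(n-1)\delta_{m-1}$ the entry $1$ can occur only in the top row, and if it occurs $i$ times it fills the cells $(1,1),\ldots,(1,i)$, whose contents are $0,-1,\ldots,-(i-1)$; the corresponding $\x_1$-factor is therefore exactly $\x_1^{(r)}\x_1^{(r-1)}\cdots\x_1^{(r-i+1)}$. On the right-hand side the only factor containing $\x_1$ is $\sigma^{(r)}_{(n-1)(m-1)}$, and by definition its summand carrying this $\x_1$-monomial is $\x_1^{(r)}\cdots\x_1^{(r-i+1)}\,\tau^{(r-i)}_{(n-1)(m-1)-i}(\x_2,\ldots,\x_m)$. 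Matching coefficients of $\x_1^{(r)}\cdots\x_1^{(r-i+1)}$ turns $(\star)$ into the family of skew identities
\[
s^{(r)}_{(n-1)\delta_{m-1}/(i)}(\x_2,\ldots,\x_m)=\tau^{(r-i)}_{(n-1)(m-1)-i}(\x_2,\ldots,\x_m)\,s^{(r+1)}_{(n-1)\delta_{m-2}}(\x_2,\ldots,\x_m),\qquad (\star\star)
\]
for $0\le i\le (n-1)(m-1)$, where the skew shape deletes the first $i$ cells of the top row.

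The heart of the matter is $(\star\star)$, which I would establish by showing that, as functions of $i$, both sides obey one and the same linear recurrence and share the same trivial tail. Writing $S=s^{(r+1)}_{(n-1)\delta_{m-2}}(\x_2,\ldots,\x_m)$, the right-hand side is $\tau^{(r-i)}_{(n-1)(m-1)-i}(\x_2,\ldots,\x_m)\,S$, and Lemma~\ref{L:taurecursion} (which rests on Proposition~\ref{prop:eh}) expresses $\tau^{(r-i)}_{(n-1)(m-1)-i}$ through a sign-alternating combination of loop elementary symmetric functions $e_a(\x_2,\ldots,\x_m)$ against lower $\tau$'s, yielding a recurrence in $i$ of order at most $m-1$ because $e_a(\x_2,\ldots,\x_m)=0$ for $a>m-1$. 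Both sides vanish once $i>(n-1)(m-1)$ (there $\tau$ has negative degree and $(i)\not\subseteq(n-1)\delta_{m-1}$), so if the skew loop Schur functions $s^{(r)}_{(n-1)\delta_{m-1}/(i)}$ satisfy the same recurrence, downward induction on $i$ forces $(\star\star)$ for every $i$ with no further initial conditions.

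The step I expect to be the main obstacle is precisely verifying that the skew loop Schur functions satisfy this recurrence. I would approach it through the colored Jacobi--Trudi expansion of $s^{(r)}_{(n-1)\delta_{m-1}/(i)}$, expanding on the leftmost surviving cell of the top row and organizing the alternating $e$-coefficients so that the color indices $r-i+1-a$ are produced exactly as in Proposition~\ref{prop:eh}. The delicate, genuinely loop-theoretic point is that a full column of height $n$ along a diagonal contributes the product $\prod_{s\in\Z/n\Z}\x^{(s)}_j$, so the recurrence carries period-$n$ correction terms in these products; reconciling those terms with the column-strictness of the staircase is exactly what forces the appearance of the \emph{truncated} function $\tau$ (each value bounded by $n-1$) together with the \emph{untruncated} leading variable in $\sigma$. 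I emphasize that $(\star\star)$ cannot be obtained from a naive ``read each $\sigma$ as a tableau row'' bijection: already for $n=2,m=3$ the product produces a monomial via a column-violating reading whose weight nonetheless coincides with that of a legal tableau. This is why the argument must be algebraic, routed through Lemma~\ref{L:taurecursion}, and the matching of its period-$n$ terms on the skew side is where the real work lies.
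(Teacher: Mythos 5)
You have faithfully reproduced the paper's skeleton: the base case, the use of the inductive hypothesis to collapse the last $m-2$ factors, the expansion over the number of $1$'s in the top row, and the reduction to the skew identity $(\star\star)$ — which is exactly the paper's Proposition~\ref{P:detBim} (there phrased as $\det(B_{i,m})=\tau^{(r-i-1)}_{(n-1)m-i+1}\det(A_m)$, up to an indexing shift). But the paper's entire Section~\ref{sec:proof} exists to prove that one identity, and you do not prove it: you explicitly defer ``verifying that the skew loop Schur functions satisfy this recurrence'' as the main obstacle. So the heart of the theorem is missing from the proposal.

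Moreover, the mechanism you propose cannot run as described, because Lemma~\ref{L:taurecursion} is not valid at all the degrees your induction needs. Substituting Lemma~\ref{L:tau} into the alternating sum and applying Proposition~\ref{prop:eh} (which holds only for strictly positive index, the index-zero term surviving), one gets, for $n\mid k$ with $0\le k/n\le$ the number of variables,
\[
\sum_{j\ge 0}(-1)^j e_j^{(r-j)}\tau^{(r-j-1)}_{k-j}\;=\;(-1)^{k/n}\,e_{k/n}\Bigl(\textstyle\prod_{t\in\Z/n\Z}\x_2^{(t)},\ldots,\prod_{t\in\Z/n\Z}\x_m^{(t)}\Bigr)\;\neq\;0 .
\]
For instance, with $n=2$ and variables $\x_2,\x_3$, $k=2$: $\tau^{(r-1)}_2-e_1^{(r-1)}\tau^{(r-2)}_1+e_2^{(r-2)}=-\x_2^{(r)}\x_2^{(r-1)}-\x_3^{(r)}\x_3^{(r-1)}$. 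Consequently your sequence $R(i)$ violates your recurrence at every $i$ with $(n-1)(m-1)-i\equiv 0\pmod n$, including $i=(n-1)(m-1)$ itself (where $\tau_0=1$), so the assertion ``no further initial conditions'' is false; indeed, if both sides genuinely satisfied one homogeneous recurrence of order $m-1$ with vanishing tail, downward induction would force both to be identically zero. The period-$n$ corrections you attribute to the skew side afflict the $\tau$ side equally — consistent, since the two sides are equal, but fatal to the existence of a single correction-free recurrence to induct with. The paper's linear algebra is built precisely to dodge this: the relations it uses are the rows of the $(n(a+1)-1)\times n(a+1)$ matrix $B_m$, whose $e_0$-pivots sit only at columns avoiding one residue class mod $n$, so only instances of Lemma~\ref{L:taurecursion} with $n\nmid k$ ever occur (Proposition~\ref{P:Bzero}); since these relations do not pivot at every position, the paper does not solve position-by-position, but instead shows $B_m$ has one-dimensional kernel (some maximal minor is a nonzero skew loop Schur function), that both the vector of signed maximal minors and the $\tau$-vector lie in that kernel, and then fixes the scalar $\det(A_m)$ by comparing a single coordinate. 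To salvage your plan you would have to prove the \emph{corrected} recurrence (with the $e_{k/n}$ terms) for the skew loop Schur functions as well — essentially as hard as the theorem — or simply replace the downward induction by this rank argument.
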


\begin{remark}
When all the colors are identified, that is, $\x_i^{(s)} = \x_i^{(s')}$ for all $i$ and $s,s' \in \Z/n\Z$, 
Theorem \ref{thm:stair} is a coarsening of a result of Jucis \cite{Juc}, see also \cite[Ex. 7.30]{EC2}.
\end{remark}

It is clear that Theorem \ref{thm:stair} holds for $m =2$, for then it states that $s^{(r)}_{n-1}(\x_1,\x_2)=\sigma^{(r)}_{n-1}(\x_1,\x_2)$.  We shall prove Theorem \ref{thm:stair} in Section \ref{sec:proof}.

\section{Affine crystals}
\subsection{$R$-matrix}
We shall use \cite{Sh} as our main reference for affine crystals.

Recall that a {\it {Kirillov-Reshetikhin crystal}} of $\uqsln$ is the crystal graph corresponding to the highest weight module with highest weight proportional to one of the fundamental weights. An {\it {affine crystal}} is the tensor product of several Kirillov-Reshetikhin crystals.  We shall restrict our attention to the set $\c$ of affine crystals that are tensor products of symmetric powers of the standard representation.  Each element $b \in B$ of such a symmetric power can be identified with a single row semistandard tableau in the alphabet $1, \ldots, n$. 

If $B_1, B_2$ are Kirillov-Reshetikhin crystals, the {\it {combinatorial $R$-matrix}} is the unique isomorphism $R_{B_1,B_2}: B_1 \otimes B_2 \to B_2 \otimes B_1$ of affine crystals.  It is known that the combinatorial $R$-matrices generate an action of $S_m$ on $B_1 \otimes \cdots \otimes B_m$.


The combinatorial $R$-matrix has a convenient interpretation \cite{Sh} in terms of semistandard tableaux and the jeu de taquin algorithm \cite{EC2}. Let $b_1 \otimes b_2$ be an element of $B_1 \otimes B_2$.  Then $R_{B_1,B_2}(b_1 \otimes b_2) = c_1 \otimes c_2 \in B_2 \otimes B_1$ where $c_1, c_2$ are the unique pair of row shaped tableaux which jeu de taquin to the same tableau that $b_1$ and $b_2$ jeu de taquin to, as follows:
$$
\tableau[sY]{\bl&\bl&{1}&{2}&{2}&{4} \\{1}&{3}} \;\; \stackrel{R_{B_1,B_2}}{\longrightarrow} \;\; \tableau[sY]{ \bl&\bl&\bl&\bl&{2}&{4}\\ {1}&{1}&{2}&{3}} \;\; \text{since both jeu de taquin to} \;\; \tableau[sY]{{1}&{1}&{2}&{2}&{4} \\ {3}}
$$

The action of the combinatorial $R$-matrix can be explicitly described as follows (see \cite{HHIKTT}). Let $R_{B_1,B_2}(b_1 \otimes b_2) = c_1 \otimes c_2$ and let $\y^{(r)}_1$, $\y^{(r)}_2$, $s_1(\y^{(r)}_1)$, $s_1(\y^{(r)}_2)$ be the number of boxes filled with $r$-s in $b_1$,$b_2$,$c_1$,$c_2$ respectively, $r = 1, \ldots, n$. Then $$s_1(\y^{(r)}_1) = \y^{(r)}_2 + \ok_{r+1}(b_1,b_2) - \ok_{r}(b_1,b_2) \text{\;\;\;\;and\;\;\;\;} s_1(\y^{(r)}_2) = \y^{(r)}_1 + \ok_{r}(b_1,b_2) - \ok_{r+1}(b_1,b_2),$$ where $$\ok_{r}(b_1,b_2) = \min_{0 \leq s \leq n-1} (\sum_{t=1}^s \y^{(r+t-1)}_2+\sum_{t=s+1}^{n-1} \y^{(r+t)}_1)$$ and the indexes are taken in $\Z/n\Z$.

\begin{example}
In the example above $(\y^{(1)}_1,\y^{(2)}_1,\y^{(3)}_1,\y^{(4)}_1) = (1,0,1,0)$, $(\y^{(1)}_2,\y^{(2)}_2,\y^{(3)}_2,\y^{(4)}_2) = (1,2,0,1)$, $\ok_1(b_1, b_2) = \min(1,2,3,3)=1$, $\ok_2(b_1, b_2) = \min(2,3,3,3)=2$ and $s_1(\y^{(1)}_1) = 1 + 2 - 1 = 2$.
\end{example}

\subsection{Intrinsic energy function}
In \cite{KKMMNN} an important function $\H_{B,B'}: B \otimes B' \to \Z$ called {\it {local coenergy}} is defined for a tensor product of two affine crystals. If $B$ and $B'$ are Kirillov-Reshetikhin crystals, local coenergy has the following simple description in terms of tableaux \cite{Sh, KKN}. Given an element $b \otimes b'$ in $B \otimes B' \in \c$, form a two-row semistandard tableaux from $b$ and $b'$ as above. After that measure the maximal number of cells one can slide the top row to the left so that we still have a valid semistandard tableau. This maximal number of cells is the value of $\H_{B,B'}(b,b')$. 

\begin{example} For 
$$
\tableau[sY]{ \bl&\bl&\bl&\bl&{1}&{2}&{3}&{3}&{4} \\ {2}&{2}&{3}&{4}}
$$
the local coenergy is $3$, because 
$$
\tableau[sY]{ \bl \\ {2}} \tableau[sY]{{1}&{2}&{3}&{3}&{4} \\ {2}&{3}&{4}}\;\; \text{is semistandard while}
\;\;\;
\tableau[sY]{{1}&{2}&{3}&{3}&{4} \\ {2}&{2}&{3}&{4}} \;\;\text{is not.}
$$
\end{example}
It is easy to see that $\H_{B,B'}(b,b') = \ok_1(b,b')$, since each term $\sum_{t=1}^s \y^{(t)}_2+\sum_{t=s+1}^{n-1} \y^{(1+t)}_1$ is exactly the number of cells for which the boxes with $(s+1)$-s in them would allow to slide. The local coenergy is known to remain the same under the action of $R$-matrix: $\H_{B',B} = \H_{B,B'} \circ R_{B',B}$. 

We now define the {\it intrinsic energy function} $\D_B: B \to \Z$ (\cite{KKMMNN}) of an affine crystal $B \in \c$, following \cite{OSS}. For Kirillov-Reshetikhin crystals energy is zero.  Let $b = b_1 \otimes b_2 \otimes \ldots \otimes b_m$ be an element of an $m$-fold tensor product $B=B_1 \otimes B_2 \otimes \ldots \otimes B_m \in \c$.  We define the intrinsic energy $\D_B(b)$ to be 
\begin{equation}\label{E:global}
\D_B(b) = \sum_{1\leq i < j \leq m} \H_{B_i,B_j}(s_i s_{i+1} \cdots s_{j-2}(b_{j-1}) \otimes b_j).
\end{equation}
Although not obvious from this definition, intrinsic energy is preserved by the $R$-action. 
\begin{example}
Let us compute the intrinsic energy of the following element $b = b_1 \otimes b_2 \otimes b_3$.  
$$
\tableau[sY]{\bl&\bl&\bl&\bl&\bl&\bl&{1}&{2}&{3} \\ \bl&\bl& {1}&{2}&{2}&{4}\\ {1}&{3}} 
$$
We measure $\H(b_1, b_2) = 1$, $\H(b_2, b_3) = 2$. We also have seen above the result of applying $R_{B_1,B_2}$ to the first two tensor factors, which allows us to measure $\H(s_1(b_{2}), b_3) = 2$. Therefore $\D_B(b) = 1+2+2=5$.
$$
\tableau[sY]{\bl \\ {1}} \tableau[sY]{{1}&{2}&{2}&{4} \\ {3}} \;\;\;\;\;\;\; \tableau[sY]{ \bl \\ {1}&{2}} \tableau[sY]{{1}&{2}&{3} \\ {2}&{4}} \;\;\;\;\;\;\;  \tableau[sY]{{1}&{2}&{3} \\ {2}&{4}}
$$
\end{example}

\subsection{Product (summation) formula for intrinsic energy}
In this section, we switch from piecewise linear functions to rational functions.  The two worlds are connected via tropicalization: if $f$ is a subtraction-free polynomial, we let $\trop(f)$ denote the tropicalization of $f$, obtained by replacing addition by minimum, and multiplication by addition.

We are given a rectangular array of variables $\z_{i}^{(r)}$, $i = 1, \ldots, m$, $r \in \Z/n\Z$, with columns $b_i = (\z_i^{(1)}, \z_i^{(2)}, \ldots, \z_i^{(n)})$. It is very convenient to make the following change of variables: $\x_i^{(r)} = \z_i^{(r+1-i)}$. Define $$\k_{r}(b_j,b_{j+1}) = \sum_{s=0}^{n-1} (\prod_{t=1}^s \x^{(r+t)}_{j+1} \prod_{t=s+1}^{n-1} \x^{(r+t)}_j),$$ so that $\trop(\k_r(b_j,b_{j+1})) = \ok_{r-j+1}(b_j,b_{j+1})$.

In the variables $\x_i^{(r)}$, the birational $R$-matrix acts (see \cite[Proposition 3.1]{Y}\footnote{Our variables $\z_i^{(r)}$ are nearly the same as Yamada's $x_r^i$, differing by a reversal of the orientation of the circle.}) via algebra isomorphisms $s_1,s_2,\ldots,s_{m-1}$ of the field of rational functions in $\{\x_i^{(r)}\}$, given by 
$$s_j(\x^{(r)}_j) = \frac{\x^{(r+1)}_{j+1}  \k_{r+1}(b_j,b_{j+1})}{\k_{r}(b_j,b_{j+1})} \text{\;\;\;\; and \;\;\;\;} s_j(\x^{(r)}_{j+1}) = \frac{\x^{(r-1)}_j \k_{r-1}(b_j,b_{j+1})}{\k_{r}(b_j,b_{j+1})}$$
and $s_j(\x^{(r)}_k)=\x^{(r)}_k$ for $k \neq j,j+1$.  We also have $\HH(b_j \otimes b_{j+1})= \kappa_j(b_j, b_{j+1})$, the rational analogue of the local coenergy.  We let $\DD_B$
denote the rational analogue of the intrinsic energy function, so that $\trop(\DD_B) = \D_B$.  The main result of this section is a product formula for $\DD_B$.  We remark that Kirillov \cite{K} has also studied the rational functions $\HH$ and $\DD_B$.

\begin{lem} \label{lem:tact}
 Suppose $1 \leq i < j \leq m$. 
Then
$$\k_r(s_is_{i+1}\cdots s_{j-2}(b_{j-1}),b_j) = \frac{\sigma^{(r-j+i)}_{(n-1)(j-i)}(\x_i,\x_{i+1},\ldots,\x_j)}{\sigma^{(r-j+i)}_{(n-1)(j-i-1)}(\x_i,\x_{i+1},\ldots,\x_{j-1})}$$ and 
 $$s_is_{i+1}\cdots s_{j-1}(\x^{(r)}_j) = \frac{\x^{(r-j+i)}_i \sigma^{(r-j+i-1)}_{(n-1)(j-i)}(\x_i,\x_{i+1},\ldots,\x_j)}{\sigma^{(r-j+i)}_{(n-1)(j-i)}(\x_i,\x_{i+1},\ldots,\x_j)} = \frac{\sigma^{(r-j+i)}_{(n-1)(j-i)+1}(\x_i,\x_{i+1},\ldots,\x_j)}{\sigma^{(r-j+i)}_{(n-1)(j-i)}(\x_i,\x_{i+1},\ldots,\x_j)}.$$
\end{lem}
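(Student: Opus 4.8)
The plan is to prove both formulas simultaneously by induction on $j-i$, since the two statements are visibly coupled: the numerator of the second formula, $\sigma^{(r-j+i)}_{(n-1)(j-i)+1}$, is built from the numerator $\sigma^{(r-j+i)}_{(n-1)(j-i)}$ of the first, and applying the next simple reflection $s_{j-1}$ requires knowing how $s_is_{i+1}\cdots s_{j-2}(b_{j-1})$ compares to $b_j$ — which is exactly the content of the first formula. First I would establish the base case $j-i=1$: here there are no reflections to apply, so the first formula reduces to $\k_r(b_{j-1},b_j) = \sigma^{(r-1)}_{(n-1)}(\x_{j-1},\x_j)/\sigma^{(r-1)}_0(\x_{j-1})$, and one checks directly from the definition $\sigma^{(r)}_k = \sum_i \x_1^{(r)}\cdots\x_1^{(r-i+1)}\tau^{(r-i)}_{k-i}$ that $\sigma^{(r-1)}_0 = 1$ and that $\sigma^{(r-1)}_{n-1}(\x_{j-1},\x_j)$ expands into exactly the $n$ terms defining $\k_r$. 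The second formula in the base case is the bare $R$-matrix action $s_{j-1}(\x^{(r)}_j)$, which I would match against the definition of $s_j$ given just before the lemma.

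For the inductive step, I would assume both formulas hold for $j-i-1$ and push through one more reflection. The key algebraic identity here is the recursion in Lemma \ref{L:taurecursion}, together with the definition of $\sigma^{(r)}_k$, which I expect to yield the telescoping needed to pass from the $(n-1)(j-i-1)$-indexed $\sigma$'s to the $(n-1)(j-i)$-indexed ones. Concretely, the first formula for $\k_r(s_i\cdots s_{j-2}(b_{j-1}),b_j)$ should follow by substituting the inductive expression for $s_i\cdots s_{j-2}(b_{j-1})$ (the second formula at the previous level, which gives the $r$-th coordinate of that tableau as a ratio of $\sigma$'s) into the definition of $\k_r$, and then recognizing the resulting sum as $\sigma^{(r-j+i)}_{(n-1)(j-i)}$ divided by $\sigma^{(r-j+i)}_{(n-1)(j-i-1)}$. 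The second identity at the current level then follows by applying $s_{j-1}(\x^{(r)}_j)$ using the base-case $R$-matrix rule, with the $\k_r$ appearing in the denominator of the $R$-matrix action replaced by the first formula just proved; the two different expressions for $s_is_{i+1}\cdots s_{j-1}(\x^{(r)}_j)$ are then reconciled via the defining relation of $\sigma^{(r)}_k$ that peels off a leading factor $\x_i^{(r-j+i)}$.

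The main obstacle I anticipate is bookkeeping the color shifts: every application of a reflection rotates the upper indices (colors) in $\Z/n\Z$, and the definitions of $\k_r$, $\tau^{(r)}_k$, and $\sigma^{(r)}_k$ each shift the color differently as the subscript increases. Getting the superscripts $r-j+i$, $r-j+i-1$, and the internal shifts inside $\sigma^{(r)}_k$ to align consistently across the induction is where an error is most likely to creep in, and it is the part that genuinely needs care rather than routine manipulation. A secondary subtlety is verifying that the constraint ``no index occurs more than $n-1$ times'' built into $\tau^{(r)}_k$ is precisely what matches the $n$-term structure of the $R$-matrix and the mod-$n$ periodicity; I would check this carefully at the base case so that the inductive step inherits it automatically. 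Once the color arithmetic is pinned down, both formulas should emerge from the same telescoping computation.
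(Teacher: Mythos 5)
Your overall skeleton is exactly the paper's proof: a parallel induction on $j-i$, with the base case $j-i=1$ read off from the definitions of $\k_r$ and of the $R$-action, the first formula at level $j-i$ obtained by substituting the second formula at level $j-i-1$ into the definition of $\k_r$, and the second formula then obtained by writing $s_i\cdots s_{j-1}(\x_j^{(r)})$ as $s_i\cdots s_{j-2}$ applied to the $R$-matrix rule for $s_{j-1}$ and inserting the first formula. The telescoping you anticipate is also real: in $\prod_{t=s+1}^{n-1} s_i\cdots s_{j-2}(\x_{j-1}^{(r+t)})$ the ratios of $\sigma$'s from the inductive hypothesis cancel across consecutive values of $t$, leaving one $\sigma^{(r+s-j+i+1)}_{(n-1)(j-i-1)}$ over $\sigma^{(r-j+i)}_{(n-1)(j-i-1)}$ times a run of $n-1-s$ factors $\x_i^{(\cdot)}$.

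However, you misidentify the key step. After that telescoping one must still prove
$$
\sigma^{(r-j+i)}_{(n-1)(j-i)}(\x_i,\ldots,\x_j) = \sum_{s=0}^{n-1}\Bigl(\prod_{t=s-j+i+2}^{n-j+i}\x_i^{(r+t)}\Bigr)\,\sigma^{(r+s-j+i+1)}_{(n-1)(j-i-1)}(\x_i,\ldots,\x_{j-1})\,\prod_{t=1}^{s}\x_j^{(r+t)},
$$
and this is \emph{not} an instance of Lemma \ref{L:taurecursion}: that alternating $e$--$\tau$ relation is used only for Theorem \ref{thm:stair}, via Proposition \ref{P:Bzero}, and plays no role in the proof of Lemma \ref{lem:tact}. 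What is actually needed is a direct combinatorial decomposition: in any monomial of $\sigma^{(r-j+i)}_{(n-1)(j-i)}(\x_i,\ldots,\x_j)$, the variable $\x_j$ occurs some number $s\le n-1$ of times (the multiplicity bound built into $\tau$) and takes the trailing block of colors, while the $\x_i$'s and $\x_j$'s together occur at least $n-1$ times, because $\x_{i+1},\ldots,\x_{j-1}$ can contribute at most $(n-1)(j-i-1)$ factors; hence one can split off exactly $n-1-s$ leading $\x_i$-factors and the $s$ trailing $\x_j$-factors, and what remains is precisely a monomial of $\sigma^{(r+s-j+i+1)}_{(n-1)(j-i-1)}(\x_i,\ldots,\x_{j-1})$. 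This is where the ``at most $n-1$ occurrences'' constraint does its work, and it must be invoked at \emph{every} level of the induction --- not, as you propose, verified only in the base case with the inductive step ``inheriting it automatically.'' As written, your plan stalls at the displayed identity; replacing the appeal to Lemma \ref{L:taurecursion} by the counting argument above repairs it, and the rest of your outline then goes through (including the equality of the two expressions for $s_i\cdots s_{j-1}(\x_j^{(r)})$, which requires the vanishing $\tau^{(r-j+i)}_{(n-1)(j-i)+1}(\x_{i+1},\ldots,\x_j)=0$ when you peel off the leading factor $\x_i^{(r-j+i)}$).
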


\begin{proof}
 We prove the two statements in parallel by induction on $j-i$. For $j-i=1$ they coincide with the formulae for the $\k_r$ and the $R$-action of $s_i$.  By the induction assumption 
$$s_is_{i+1}\cdots s_{j-2}(\x^{(r)}_{j-1}) =  \frac{\x^{(r-j+i+1)}_i \sigma^{(r-j+i)}_{(n-1)(j-i-1)}(\x_i,\x_{i+1},\ldots,\x_{j-1})}{\sigma^{(r-j+i+1)}_{(n-1)(j-1-i)}(\x_i,\x_{i+1},\ldots,\x_{j-1})}.$$ 
Therefore 
\begin{align*}
&\k_r(s_is_{i+1}\cdots s_{j-2}(b_{j-1}),b_j) \\
&= \sum_{s=0}^{n-1} (\prod_{t=1}^s \x^{(r+t)}_{j} \prod_{t=s+1}^{n-1} s_is_{i+1}\cdots s_{j-2}(\x^{(r+t)}_{j-1}))\\ 
&=\sum_{s=0}^{n-1} \frac{\prod_{t=s-j+i+2}^{n-j+i} \x_i^{(r+t)} \sigma^{(r+s-j+i+1)}_{(n-1)(j-i-1)}(\x_i,\x_{i+1},\ldots,\x_{j-1}) \prod_{t=1}^s \x_j^{(r+t)}} {\sigma^{(r-j+i)}_{(n-1)(j-i-1)}(\x_i,\x_{i+1},\ldots,\x_{j-1})}\\ &=\frac{\sigma^{(r-j+i)}_{(n-1)(j-i)}(\x_i,\x_{i+1},\ldots,\x_j)}{\sigma^{(r-j+i)}_{(n-1)(j-i-1)}(\x_i,\x_{i+1},\ldots,\x_{j-1})}.
\end{align*} 
The last equality holds because in a term of $\sigma^{(r-j+i)}_{(n-1)(j-i)}(\x_i,\x_{i+1},\ldots,\x_j)$ the number of the $\x_j^{(t)}$-s is at most $n-1$, while the number of the $\x_i^{(t)}$-s and the $\x_j^{(t)}$-s together should be at least $n-1$.

Now we can also prove the second claim, since $$s_is_{i+1}\cdots s_{j-1}(\x^{(r)}_j) = \frac{s_is_{i+1}\cdots s_{j-2}(\x^{(r-1)}_{j-1}) \k_{r-1}(s_is_{i+1}\cdots s_{j-2}(b_{j-1}),b_j)}{\k_{r}(s_is_{i+1}\cdots s_{j-2}(b_{j-1}),b_j)}=$$ $$\frac{\x^{(r-j+i)}_i \sigma^{(r-j+i-1)}_{(n-1)(j-i-1)}(\x_i,\ldots,\x_{j-1})}{\sigma^{(r-j+i)}_{(n-1)(j-1-i)}(\x_i,\ldots,\x_{j-1})} \frac{\sigma^{(r-j+i-1)}_{(n-1)(j-i)}(\x_i,\ldots,\x_j)}{\sigma^{(r-j+i-1)}_{(n-1)(j-i-1)}(\x_i,\ldots,\x_{j-1})} \frac {\sigma^{(r-j+i)}_{(n-1)(j-i-1)}(\x_i,\ldots,\x_{j-1})}{\sigma^{(r-j+i)}_{(n-1)(j-i)}(\x_i,\ldots,\x_j)}$$ $$= \frac{\x^{(r-j+i)}_i \sigma^{(r-j+i-1)}_{(n-1)(j-i)}(\x_i,\ldots,\x_j)}{\sigma^{(r-j+i)}_{(n-1)(j-i)}(\x_i,\ldots,\x_j)}.$$
\end{proof}

\begin{thm} \label{thm:energyprod}
 We have $$\DD_B(b) = \sigma^{(n)}_{(n-1)(m-1)}(\x_1,\ldots,\x_m) \sigma^{(1)}_{(n-1)(m-2)}(\x_2,\ldots,\x_{m})
\cdots
\sigma^{(m-2)}_{(n-1)}(\x_{m-1},\x_m).$$
\end{thm}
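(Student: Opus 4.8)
The plan is to start from the rational analogue of the defining sum \eqref{E:global}. Since $\trop(\DD_B) = \D_B$ and tropicalization turns products into sums, the rational intrinsic energy is the multiplicative lift
$$
\DD_B(b) = \prod_{1 \leq i < j \leq m} \HH(s_i s_{i+1} \cdots s_{j-2}(b_{j-1}) \otimes b_j),
$$
each factor being the rational local coenergy $\HH$ of the indicated pair. First I would identify each factor with a value of $\k$. The image $s_i \cdots s_{j-2}(b_{j-1})$ is the reorganised particle still carrying the index $j-1$, so its interaction with $b_j$ (carrying index $j$) is governed by the color $j-1$; that is, I would argue that $\HH(s_i \cdots s_{j-2}(b_{j-1}) \otimes b_j) = \k_{j-1}(s_i \cdots s_{j-2}(b_{j-1}), b_j)$. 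This is the base relation $\HH(b_a \otimes b_{a+1}) = \k_a(b_a, b_{a+1})$ read at $a = j-1$, and it tropicalizes correctly to $\ok_1$ as required.

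Next I would feed this into Lemma \ref{lem:tact}. Taking $r = j-1$ there, the color of the staircase functions becomes $r - j + i = i - 1$, so each factor rewrites as
$$
\HH(s_i \cdots s_{j-2}(b_{j-1}) \otimes b_j) = \frac{\sigma^{(i-1)}_{(n-1)(j-i)}(\x_i, \ldots, \x_j)}{\sigma^{(i-1)}_{(n-1)(j-i-1)}(\x_i, \ldots, \x_{j-1})}.
$$
The crucial feature is that for fixed $i$ the color $i-1$ of every $\sigma$ is independent of $j$, while the arguments and degrees of numerator and denominator nest consecutively as $j$ varies.

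Then I would telescope. Fixing $i$ and multiplying over $j = i+1, \ldots, m$, the denominator $\sigma^{(i-1)}_{(n-1)(j-i-1)}(\x_i, \ldots, \x_{j-1})$ of the $j$-th factor coincides with the numerator of the $(j-1)$-th factor, while the bottom denominator is $\sigma^{(i-1)}_0(\x_i) = 1$. Hence the product over $j$ collapses to the surviving top numerator $\sigma^{(i-1)}_{(n-1)(m-i)}(\x_i, \ldots, \x_m)$. Multiplying over $i = 1, \ldots, m-1$, and noting that $i - 1 \equiv n$ when $i = 1$, I would obtain exactly the claimed product, completing the proof.

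I expect the main obstacle to be the first step: pinning down the color of each rational local coenergy factor as $j-1$ rather than the current position $i$ of the moved tableau. This requires care with how the combinatorial $R$-matrix relabels particles and with the color conventions tying $\HH$ to $\k$, and it is the one genuinely conceptual point. Once that identification is secured, Lemma \ref{lem:tact} does all the heavy lifting and the remaining telescoping is mechanical.
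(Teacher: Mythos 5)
Your proposal is correct and takes essentially the same approach as the paper: the paper's one-sentence proof likewise combines the rational (multiplicative) lift of \eqref{E:global} with the key identification $\HH_{B_i,B_j}(s_i s_{i+1}\cdots s_{j-2}(b_{j-1})\otimes b_j)=\k_{j-1}(s_i s_{i+1}\cdots s_{j-2}(b_{j-1}),b_j)$ and then invokes Lemma \ref{lem:tact} at $r=j-1$, leaving the resulting telescoping product implicit. Your write-up simply spells out the telescoping over $j$ for fixed $i$ and the color bookkeeping ($r-j+i=i-1$, with $i-1\equiv n$ for $i=1$) that the paper omits.
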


\begin{proof}
The result follows from Lemma \ref{lem:tact} and \eqref{E:global}.
\end{proof}

\begin{remark}
Comparing \cite[Theorem 4.2]{NY2} with Lemma \ref{lem:tact} and Theorem \ref{thm:energyprod} one can see that the tropicalization of the $\sigma^{(i)}_{(n-1)(m-1-i)}(\x_{i+1},\ldots,\x_{m})$ is essentially the index $\ind(m-i)$ in the index decomposition of charge in \cite{NY2}. 
\end{remark}
\begin{remark}\label{rem:irred}
Theorem \ref{thm:energyprod} gives the irreducible factorization of $\DD_B(b)$ (whereas Theorem \ref{thm:main2} gives the monomial expansion).  To see this, one first notes that $\sigma^{(r)}_{(n-1)}(\x_1,\x_2)$ has a unique monomial which involves $\x_2^{(r)}$, from which one deduces the irreducibility.  Now suppose that $\sigma^{(r)}_{(n-1)(m-1)}(\x_1,\x_2,\ldots,\x_m)$ factorizes non-trivially as the product $fg$.  We may write $f= a\x_m^{(r-(n-1)(m-2))}+b$ and $g=c$ as polynomials in $\x_m^{(r-(n-1)(m-2))}$, where $a,b,c$ do not involve $\x_m^{(r-(n-1)(m-2))}$.  One verifies that none of the variables $\x_m^{(s)}$ divide $\sigma^{(r)}_{(n-1)(m-1)}(\x_1,\x_2,\ldots,\x_m)$ and every monomial which contains $\x_m^{(r-(n-1)(m-2))}$ is divisible by the product $\x_m^{(r-(n-1)(m-2))}\x_m^{(r-(n-1)(m-2)-1)} \cdots \x_m^{(r-(n-1)(m-1)+1)}$.  Thus we have $f = a' \x_m^{(r-(n-1)(m-2))}\x_m^{(r-(n-1)(m-2)-1)} \cdots \x_m^{(r-(n-1)(m-1)+1)}+ b$, where $a'$ is a polynomial not involving any $\x_m^{(s)}$.  It is easy to see that $a'$ cannot be a unit.  But we then have a non-trivial factorization $a'c = \sigma^{(r)}_{(n-1)(m-2)}(\x_1,\ldots,\x_{m-1})$, and we may proceed by induction.
\end{remark}


\section{Proof of Theorem \ref{thm:stair}}\label{sec:proof}
We let $A_m$ denote the Jacobi-Trudi matrix for the dilated staircase Schur funtion $s^{(r)}_{(n-1)\delta_{m-1}}$.  By adding extra columns of size $0$ to $(n-1)\delta_{m-1}$ we may assume that $A_m$ is a $na \times na$ matrix.  (Specifically, $a = \lceil(n-1)(m-1)/n\rceil$.)

\begin{example}
For $n = 3$ we have
$$
A_4 =\left( \begin{array}{cccccc} 
e_3^{(r)} & e_4^{(r-1)} &&&&\\
e_2^{(r)} & e_3^{(r-1)} & e_4^{(r-2)} &&& \\
e_0^{(r)} & e_1^{(r-1)} & e_2^{(r-2)} & e_3^{(r)} & e_4^{(r-1)}&\\
&e_0^{(r-1)} & e_1^{(r-2)} & e_2^{(r)} & e_3^{(r-1)} & e_4^{(r-2)}\\
& &  & e_0^{(r)} & e_1^{(r-1)} & e_2^{(r-2)}\\
&&  & & e_0^{(r-1)} & e_1^{(r-2)}
\end{array} \right) 
$$
\end{example}

\begin{lem}\label{lem:translate}
Suppose $n < i \leq na$.  Then column $i$ of $A_m$ is obtained from column $i-n$ by shifting the non-zero entries down by $n-1$.
\end{lem}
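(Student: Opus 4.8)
The plan is to read off the band structure of the Jacobi--Trudi matrix directly. Writing $\lambda = ((n-1)\delta_{m-1})'$ for the conjugate of the dilated staircase, the entry of $A_m$ in row $p$ and column $j$ is $e^{(r-j+1)}_{\lambda_p - p + j}$, with the color $r-j+1$ read modulo $n$ and $\lambda_p$ set to $0$ in the padded range; one verifies this against the displayed $n=3$ example. The claim that column $i$ is column $i-n$ with its nonzero entries pushed down by $n-1$ rows is then the assertion that the $(p+n-1,i)$ entry equals the $(p,i-n)$ entry for every $p$, together with the vanishing of the top $n-1$ entries of column $i$. I would split this into a statement about colors and a statement about degrees.

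The color part is immediate: column $i$ has color $r-i+1$ and column $i-n$ has color $r-i+1+n$, and these agree in $\Z/n\Z$. Hence only the subscripts need to be matched. Comparing the degree $\lambda_{p+n-1}-(p+n-1)+i$ in row $p+n-1$ of column $i$ with the degree $\lambda_p-p+(i-n)$ in row $p$ of column $i-n$, the two are equal exactly when $\lambda_p - \lambda_{p+n-1}=1$. So the entire lemma reduces to this single identity for the conjugate staircase, which is a clean reflection of the self-similarity of $\delta_{m-1}$.

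To prove the identity I would compute the conjugate explicitly. Since the parts of $(n-1)\delta_{m-1}$ are $(n-1)(m-k)$, its conjugate is $\lambda_p = \max\bigl(0,\, m-\lceil p/(n-1)\rceil\bigr)$. Replacing $p$ by $p+n-1$ raises $\lceil p/(n-1)\rceil$ by exactly $1$, so $\lambda_{p+n-1}=\max(0,\lambda_p-1)$; thus $\lambda_p-\lambda_{p+n-1}=1$ for every $p$ with $\lambda_p>0$, i.e. on the support of $\lambda$. This gives the equality of entries wherever the row $p$ entry of column $i-n$ is allowed to be nonzero.

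The step requiring the most care is the boundary bookkeeping: one must check that the translation is exact where entries turn into zeros, so that no nonzero entry is created or lost. Here I would use $a=\lceil (n-1)(m-1)/n\rceil$, which yields $na \le (n-1)(m-1)+n-1$. Three checks suffice. For $q\le n-1$ and $i>n$ the top $n-1$ entries of column $i$ vanish, since $\lambda_q=m-1$ forces the degree $m-1-q+i$ to exceed $m$ (as $i>n\ge q+1$). In the padded range $p>(n-1)(m-1)$, where $\lambda_p=\lambda_{p+n-1}=0$ and the degree identity fails, both relevant degrees are negative by the bound on $na$, so both entries already vanish. Finally, the same degree bound shows that any nonzero entry of column $i-n$ sits in a row $p\le na-n+1$, so it does not fall off the bottom when shifted down by $n-1$. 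Combining the entrywise equality with these vanishing statements yields the claimed shift.
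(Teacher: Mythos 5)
The paper itself states this lemma with no proof at all---it is treated as evident from the band structure of the Jacobi--Trudi matrix---so your write-up is a genuine filling-in rather than a rival to an existing argument, and its skeleton is the right one: read off the entry in row $p$, column $j$ as $e^{(r-j+1)}_{\lambda_p-p+j}$ with $\lambda=((n-1)\delta_{m-1})'$ padded by zeros (this matches the paper's displayed $A_4$, and in effect corrects the missing $-i+j$ in the paper's stated Jacobi--Trudi formula), observe that colors are $n$-periodic in the column index, reduce the degree matching to $\lambda_p-\lambda_{p+n-1}=1$, and verify that identity from $\lambda_p=\max\bigl(0,\,m-\lceil p/(n-1)\rceil\bigr)$. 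That reduction, and your first two boundary checks, are correct.

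The gap is in your third boundary check. You assert that ``the same degree bound,'' i.e. $na\le (n-1)(m-1)+n-1$, shows every nonzero entry of column $i-n$ lies in a row $p\le na-n+1$. That inequality cannot deliver this: nonvanishing only gives $p\le \lambda_p+i-n\le (m-1)+na-n$, which exceeds $na-n+1$ whenever $m>2$, so the crude estimate (which is all that bound feeds into) is insufficient. What is actually needed is the \emph{lower} bound $na\ge (n-1)(m-1)$ combined with the staircase structure of $\lambda$: if $p\ge na-n+2$ and $e^{(\cdot)}_{\lambda_p-p+i-n}\neq 0$, then $\lambda_p\ge p-i+n\ge p-na+n\ge 2$, and by your formula for $\lambda$ this forces $p\le (n-1)(m-2)=(n-1)(m-1)-(n-1)\le na-n+1$, a contradiction. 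With this two-line substitution the proof is complete; the defect is local and easily repaired, but as written the cited bound does not justify the step.
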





Let $B_m$ denote the $n(a+1)-1\times n(a+1)$ matrix obtained by adding
$n$ extra columns to $A_m$, so that Lemma \ref{lem:translate} is still true.
\begin{example}
For $n = 3$ we have
$$
B_4 =\left( \begin{array}{ccccccccc} 
e_3^{(r)} & e_4^{(r-1)} &&&&&&&\\
e_2^{(r)} & e_3^{(r-1)} & e_4^{(r-2)} &&&&&& \\
e_0^{(r)} & e_1^{(r-1)} & e_2^{(r-2)} & e_3^{(r)} & e_4^{(r-1)}&&&&\\
&e_0^{(r-1)} & e_1^{(r-2)} & e_2^{(r)} & e_3^{(r-1)} & e_4^{(r-2)}&&&\\
&&& e_0^{(r)} & e_1^{(r-1)} & e_2^{(r-2)} & e_3^{(r)} & e_4^{(r-1)} &\\
&&&& e_0^{(r-1)} & e_1^{(r-2)} & e_2^{(r)} & e_3^{(r-1)} & e_4^{(r-2)} \\
&&&&&& e_0^{(r)} & e_1^{(r-1)} & e_2^{(r-2)}\\
&&&&&&& e_0^{(r-1)} & e_1^{(r-2)}
\end{array} \right) 
$$
\end{example}

Let $$\T = (\tau^{(r-1)}_{(n-1)m},-\tau^{(r-2)}_{(n-1)m-1},\ldots,\pm\tau^{(r-n(a+1)+1)}_{(n-1)m-n(a+1)+1})$$ be a column vector with components in $\LSym$.
\begin{prop}\label{P:Bzero}
The vector $B_m \cdot \T$ is the zero vector.
\end{prop}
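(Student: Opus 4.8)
The plan is to verify that every entry of the vector $B_m \cdot \T$ vanishes, by recognizing each such entry as an instance of the recursion in Lemma \ref{L:taurecursion}. The key structural observations are that every entry in column $j$ of $B_m$ carries color $r-j+1$ (by the Jacobi--Trudi formula and Lemma \ref{lem:translate}, as visible in the displayed examples), while within each row the subscript of $e$ increases by one as $j$ increases by one, this band structure persisting across the adjoined columns by Lemma \ref{lem:translate}. Thus in row $i$ the nonzero entries are $e^{(r-j+1)}_{k}$, placed in column $j = j_0(i)+k$, where $j_0(i)$ is the column at which the subscript would be $0$ and $k$ ranges over those values for which the column exists. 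The crucial bookkeeping point is that the $j$-th component of $\T$, namely $(-1)^{j-1}\tau^{(r-j)}_{(n-1)m-j+1}$, has color $r-j$, exactly one less than the color $r-j+1$ of column $j$, and this color match holds uniformly in the row index $i$.

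Next I would fix a row $i$ and reindex the dot product $\sum_j B_m[i,j]\,\T[j]$ by $k = j - j_0(i)$. Setting $R = r - j_0 + 1$ and $K = (n-1)m - j_0 + 1$, the product of the column entry $e^{(r-j+1)}_k$ with the component $\T[j]$ becomes $(-1)^{j_0-1}(-1)^k\, e^{(R-k)}_k\, \tau^{(R-k-1)}_{K-k}$, since $R-k$ is the $e$-color, $R-k-1$ the $\tau$-color, and $K-k$ the $\tau$-subscript. Pulling out the global sign $(-1)^{j_0-1}$, the row dot product is exactly $(-1)^{j_0-1}$ times the left-hand side of Lemma \ref{L:taurecursion} with parameters $R$ and $K$ in place of $r$ and $k$. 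By that lemma this sum is $0$, provided the summation genuinely ranges over all $k \geq 0$.

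The main obstacle, and the reason one must pass from the square matrix $A_m$ to the slightly larger rectangular $B_m$, is precisely this proviso: I must check that every term $(-1)^k e^{(R-k)}_k \tau^{(R-k-1)}_{K-k}$ which is not literally present as a column of $B_m$ vanishes on its own, so that the partial row sum coincides with the full recursion sum. Terms omitted on the left (columns $<1$, corresponding to small $k$) have large $\tau$-subscript; for the topmost rows one computes $j_0 = 2-m$, whence $K-k \geq (n-1)m+1 > m(n-1)$, so $\tau^{(R-k-1)}_{K-k}=0$ by the degree bound on $\tau$. Terms omitted on the right (columns $> n(a+1)$, corresponding to large $k$) vanish because either $k>m$, forcing $e^{(R-k)}_k=0$, or $K-k<0$, forcing $\tau^{(R-k-1)}_{K-k}=0$. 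The delicate point is that the number of adjoined columns and the resulting dimensions $(n(a+1)-1)\times n(a+1)$ are exactly what make the window of existing columns in each row coincide with the support of the non-vanishing terms of the recursion; verifying this alignment for the extreme (top and bottom) rows, where the band is cut off, is the real content of the argument, the interior rows being immediate since their entire band $e_0,\ldots,e_m$ already fits inside the columns of $B_m$.
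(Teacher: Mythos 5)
Your proposal is correct and takes essentially the same approach as the paper, whose entire proof consists of citing Lemma \ref{L:taurecursion} together with the vanishing $\tau^{(s)}_k = 0$ for $k > (n-1)m$ --- that is, exactly the row-by-row truncation argument you spell out, with the same color bookkeeping and the same two boundary mechanisms. The alignment check you flag but leave undone is just the inequality $n(a+1) \geq (n-1)m+1$, equivalently $na \geq (n-1)(m-1)$, which is immediate from $a = \lceil (n-1)(m-1)/n \rceil$; granted this, every term omitted on the right of any row has $k > K$, hence negative $\tau$-subscript, so your stated dichotomy does close the proof.
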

\begin{proof}
It follows immediately from Lemma \ref{L:taurecursion}, and the fact that $\tau^{(s)}_{k} = 0$ as long as $k > (n-1)m$.
\end{proof}

Let $B_{i,m}$ be the square matrix obtained from $B_m$ by removing the $i$-th column.  It is easy to see that $\det(B_{i,m}) = s^{(r-1)}_{(n-1)\delta_{m}/(i-1)}$ is a loop skew Schur function.

\begin{prop}\label{P:detBim}
We have $\det(B_{i,m}) = \tau^{(r-i)}_{(n-1)m-i+1}\, \det(A_m)$.
\end{prop}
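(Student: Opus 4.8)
The plan is to exploit that $B_m$ is an $(N-1)\times N$ matrix, where $N=n(a+1)$, together with the null vector $\T$ supplied by Proposition \ref{P:Bzero}. The key linear-algebra input is the standard identity that, for any $(N-1)\times N$ matrix $M$ over a field, the vector of signed maximal minors $v=\bigl((-1)^i\det(M_{\widehat{\imath}})\bigr)_{i=1}^N$ lies in the right kernel of $M$, where $M_{\widehat{\imath}}$ is $M$ with column $i$ deleted. (One proves this by appending any row of $M$ to form a square matrix with a repeated row and expanding the vanishing determinant along that row.) Applying this to $M=B_m$, the vector with components $(-1)^i\det(B_{i,m})$ lies in $\ker B_m$.

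First I would record that $\ker B_m$ is one-dimensional over the fraction field of $\LSym_m$. This is immediate once the base case below shows $\det(B_{N,m})=\det(A_m)$, since $\det(A_m)=s^{(r)}_{(n-1)\delta_{m-1}}$ is a nonzero loop Schur function, so some maximal minor of $B_m$ is nonzero and $B_m$ has full rank $N-1$. By Proposition \ref{P:Bzero} the vector $\T$ is a nonzero element of this one-dimensional kernel, whose $i$-th component is $(-1)^{i-1}\tau^{(\cdot)}_{(n-1)m-i+1}$ with color dictated by Lemma \ref{L:taurecursion}. Hence $\bigl((-1)^i\det(B_{i,m})\bigr)_i = C\cdot\T$ for a unique scalar $C$; equivalently $(-1)^i\det(B_{i,m}) = C\,\T_i$ for every $i$.

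It remains to pin down $C$, and for this I would evaluate both sides at $i=N$, the last column. Here the degree drops to $(n-1)m-N+1=0$, so $\T_N=\pm\tau^{(\cdot)}_0=\pm 1$. On the matrix side, deleting the last column exhibits $B_{N,m}$ in block form $\bigl(\begin{smallmatrix} A_m & * \\ 0 & D\end{smallmatrix}\bigr)$: by Lemma \ref{lem:translate} the top-left $na\times na$ block is exactly $A_m$, the lower-left block vanishes because columns $1,\ldots,na$ have no nonzero entries in rows below row $na$, and the bottom-right $(n-1)\times(n-1)$ block $D$ is triangular with diagonal entries $e_0^{(\cdot)}=1$. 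Thus $\det(B_{N,m})=\det(A_m)\det(D)=\det(A_m)$, which forces $C=\pm\det(A_m)$. Substituting back, the sign from passing from the $N$-th to the $i$-th component cancels against $(-1)^i/(-1)^{i-1}$, and one obtains the asserted identity $\det(B_{i,m})=\tau^{(r-i-1)}_{(n-1)m-i+1}\det(A_m)$.

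The main obstacle is the base-case analysis of the third paragraph: verifying carefully from Lemma \ref{lem:translate} that deletion of the last column really does split off $A_m$ together with a unit-determinant triangular block — in particular that the $n-1$ added rows meet only the added columns (so the lower-left block vanishes), and that the residual block $D$ is genuinely triangular with $e_0$'s along its diagonal. The remaining sign and color bookkeeping, matching the colors produced by Lemma \ref{L:taurecursion} to those in the statement, is then routine.
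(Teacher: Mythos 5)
Your overall route is the same as the paper's: the signed maximal minors of the $(N-1)\times N$ matrix $B_m$ form a right null vector (repeated-row expansion), Proposition \ref{P:Bzero} supplies the null vector $\T$, full rank makes the kernel one-dimensional, and one component fixes the proportionality constant. The gap is in the component you chose for normalization. You claim that at $i=N=n(a+1)$ the degree is $(n-1)m-N+1=0$. In fact $(n-1)m-N+1=-c$, where $c=na-(n-1)(m-1)$; since $a=\lceil (n-1)(m-1)/n\rceil$, the quantity $c$ ranges over $\{0,1,\dots,n-1\}$ and $c=0$ holds precisely when $n\mid m-1$. So your identity $\T_N=\pm\tau^{(\cdot)}_0=\pm 1$ is valid only in that special case (which includes the paper's running example $n=3$, $m=4$ --- presumably why it survived your check). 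Whenever $n\nmid m-1$, both sides of your normalizing equation vanish: $\T_N=\pm\tau^{(\cdot)}_{-c}=0$, and also $\det(B_{N,m})=0$, because the last row of $B_m$ then has its unique nonzero entry (an $e_0^{(\cdot)}$, forced by Lemma \ref{lem:translate}) in column $N$, so deleting column $N$ leaves a zero row. Concretely, for $n=m=3$ one has $a=2$, $N=9$, and $B_{9,3}$ has zero bottom row. Thus at $i=N$ the desired identity reads $0=C\cdot 0$ and determines nothing; worse, your proof of full rank (hence of one-dimensionality of the kernel) rested on $\det(B_{N,m})=\det(A_m)\neq 0$ and collapses in exactly the same cases, so the entire argument is unsupported there. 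Your block-triangular analysis is not wrong so much as performed at the wrong column: it goes through verbatim at column $i=(n-1)m+1$, the component of $\T$ where $\tau_0$ actually sits, and that column coincides with $N$ only when $c=0$.

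The paper's normalization works for all $n,m$ and is the natural repair: delete the \emph{first} column. Then rows $1,\dots,n-1$ successively have a single surviving nonzero entry, namely $e_m^{(r-1)},\dots,e_m^{(r-n+1)}$, and by Lemma \ref{lem:translate} the complementary minor on rows $n,\dots,N-1$ and columns $n+1,\dots,N$ is exactly $\det(A_m)$; hence $\det(B_{1,m})=e_m^{(r-1)}\cdots e_m^{(r-n+1)}\det(A_m)$. On the other side, the unique multiset contributing to $\tau^{(r-1)}_{(n-1)m}$ uses every index exactly $n-1$ times, giving $\T_1=\tau^{(r-1)}_{(n-1)m}=e_m^{(r-1)}\cdots e_m^{(r-n+1)}\neq 0$. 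This simultaneously pins down the constant as $\det(A_m)$ and exhibits a nonzero maximal minor, i.e., proves the full-rank claim. If you replace your third paragraph by this computation (or run your own block argument at column $(n-1)m+1$ and supply the rank argument separately), the rest of your proof stands.
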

\begin{proof}
Since $\det(B_{i,m}) = s^{(r-1)}_{(n-1)\delta_{m}/(i-1)}$ is never the zero polynomial in the $e_i^{(s)}$, we deduce that considered as a matrix with ceofficients in the field ${\rm Frac} (\x_i^{(s)})$, the matrix $B_m$ has maximal rank.  There is thus, up to scaling, a unique solution to the equation $B_m \cdot v = 0$.  By expanding the determinant of the matrix obtained from $B_m$ by repeating a row, it is easy to see that $v = (\det(B_{1,m}),-\det(B_{2,m}),\ldots,\pm\det(B_{na+n,m}))$ is a solution.  But by Proposition \ref{P:Bzero} so is $\T$. Thus the two vectors are proportional, and it remains to check that the scaling coefficient is $\det(A_m)$. We have $\det(B_{1,m}) = e_m^{(r-1)} \dotsc e_m^{(r-n+1)} \det(A_m)$, while at the same time $\tau^{(r-1)}_{(n-1)m} = e_m^{(r-1)} \dotsc e_m^{(r-n+1)}$, and the statement follows.
\end{proof}

\begin{proof}[Proof of Theorem \ref{thm:stair}]
We have already verified the case $m= 2$, so we suppose that $m >2$, and by induction on $m$ that
\begin{align*}
&s^{(s+1)}_{(n-1)\delta_{m-1}}(\x_2,\x_3,\ldots,\x_{m+1}) \\
&= \sigma^{(s+1)}_{(n-1)(m-1)}(\x_2,\ldots,\x_{m+1}) \sigma^{(s+2)}_{(n-1)(m-2)}(\x_3,\ldots,\x_{m+1})
\cdots
\sigma^{(s+m-1)}_{(n-1)}(\x_{m},\x_{m+1})
\end{align*}
for every $s \in \Z/n\Z$.  We calculate that
\begin{align*}
&s^{(r-1)}_{(n-1)\delta_{m}}(\x_1,\x_2,\ldots,\x_{m+1}) \\
&= \sum_{i=0}^{(n-1)m} \x_1^{(r-1)}\x_1^{(r-2)}\cdots \x_1^{(r-i)} s^{(r-1)}_{(n-1)\delta_{m}/(i)}(\x_2,\x_3,\ldots,\x_{m+1}) \\
&= \sum_{i=0}^{(n-1)m} \x_1^{(r-1)}\x_1^{(r-2)}\cdots \x_1^{(r-i)} \det(B_{i,m})(\x_2,\x_3,\ldots,\x_{m+1}) \\
&= \left(\sum_{i=0}^{(n-1)m} \x_1^{(r-1)}\x_1^{(r-2)}\cdots \x_1^{(r-i)} \tau^{(r-i-1)}_{(n-1)m-i+1}(\x_2,\x_3,\ldots,\x_{m+1})\right) s^{(r)}_{(n-1)\delta_{m-1}}(\x_2,\x_3,\ldots,\x_{m+1})\\
&= \sigma^{(r-1)}_{(n-1)m}(\x_1,\x_2,\ldots,\x_{m+1})s^{(r)}_{(n-1)\delta_{m-1}}(\x_2,\x_3,\ldots,\x_{m+1})
\end{align*}
where in the first equality we used the tableau definition of $s^{(r)}_{(n-1)\delta_{m}}$, and in the penultimate equality we 
used Proposition \ref{P:detBim}.
\end{proof}

\begin{remark}
 It is clear from the theory developed in \cite{LP} that loop Schur functions are invariants of the action of the symmetric group $S_m$ via the birational $R$-action. Thus we have demonstrated directly that the energy function is an invariant of this action. This property is not obvious from the definition we use.
\end{remark}


\end{document}